\newtheorem{theorem}{Theorem}
\newtheorem{proposition}{Proposition}
\newtheorem{lemma}{Lemma} 
\newtheorem{definition}{Definition}
\newtheorem{corollary}{Corollary}
\newtheorem*{theoremA}{Theorem A}
\newtheorem*{theoremB}{Theorem B}
\newtheorem{example}{Example}
\DeclareMathOperator{\WS}{WS}
\newtheorem*{subject}{2000 Mathematics Subject Classification}
\newtheorem*{keywords}{Keywords}
\author{Marc Coppens\footnote{KU  Leuven, Technologiecampus Geel, Departement Elektrotechniek (ESAT),
Kleinhoefstraat 4, B-2440 Geel, Belgium; email: marc.coppens@kuleuven.be.}}
\title{The uniqueness of Weierstrass points with semigroup $<a;b>$ and related semigroups. }
\date{}
\begin{document}
\maketitle \noindent

\begin{abstract}
Assume $a$ and $b=na+r$ with $n \geq 1$ and $0<r<a$ are relatively prime integers.
In case $C$ is a smooth curve and $P$ is a point on $C$ with Weierstrass semigroup equal to $<a;b>$ then $C$ is called a $C_{a;b}$-curve.
In case $r \neq a-1$ and $b \neq a+1$ we prove $C$ has no other point $Q \neq P$ having Weierstrass semigroup equal to $<a;b>$.
We say the Weierstrass semigroup $<a;b>$ occurs at most once.
The curve $C_{a;b}$  has genus $(a-1)(b-1)/2$ and the result is generalized to genus $g<(a-1)(b-1)/2$.
We obtain a lower bound on $g$ (sharp in many cases) such that all Weierstrass semigroups of genus $g$ containing $<a;b>$ occur at most once.
\end{abstract}

\begin{subject}
14H55
\end{subject}

\begin{keywords}
Weierstrass points, gonality, Weierstrass semigroup, $C_{a,b}$-curves
\end{keywords}

\section{Introduction}\label{section1}

We write $\mathbb{N}$ to denote the semigroup of non-negative integers (in particular including 0).
A subsemigroup $H$ of $\mathbb{N}$ is called a Weierstrass semigroup of genus $g$ if the complement $\mathbb{N} \setminus H$ is a finite set of exactly $g$ integers.
Let $C$ be a smooth curve of genus $g$ and let $\mathcal{O}_C$ be the sheaf or regular functions on $C$.
Let $P$ be a point on $C$ and consider $\{ \deg (f) : f \in \mathcal{O}_C(C\setminus \{ P\}) \}$.
This is a Weierstrass semigroup of genus $g$ called the Weierstrass semigroup of $P$ and denoted by $\WS (P)$.
In case $ f \in \mathcal{O}_C(C\setminus \{ P\})$ is not a constant then it defines a morphism $f : C \rightarrow \mathbb{P}^1$ with $f^{-1}(\infty )=\{ P \}$ and introducing multiplicities for points on fibers of the morphism one obtains a base point free linear system $g^1_{\deg (f)}$ on $C$ containing the divisor $\deg (f)P$.
Therefore the Weierstrass semigroup of $P$ can also be described as follows
\[
\WS (P) = \{ a \in \mathbb{N} : \vert aP \vert \text { is a base point free linear system } \} \cup \{ 0 \} \text { .}
\]
The elements of $\mathbb{N} \setminus \WS (P)$ are called the gaps of $P$ (and the elements of $\WS (P)$ are called the non-gaps of $P$).
For all but finitely many points of $C$ the set of gaps of $P$ is equal to $\{ 1; 2; \cdots ; g \}$.
A point $P$ is called a Weierstrass point of $C$ in case the set of gaps of $P$ is different from $\{ 1; 2; \cdots ; g \}$.
(For a more detailled introduction see e.g. \cite {ref5} Section III-5.)

For a general curve $C$  the set of gaps of each Weierstrass point is equal to $\{ 1; 2; \cdots ;g-1; g+1 \}$.
The most special curves are the hyperelliptic curves, i.e. curves having a morphism $f : C \rightarrow \mathbb {P}^1$ of degree 2.
In case $g \geq 2$ such morphism is unique (if it exists) and the Weierstrass points are exactly the $2g+2$ ramification points of $f$. 
In this case the set of gaps of each Weierstrass point is equal to $\{1; 3; 5; \cdots ; 2g-1 \}$.
Hence the Weierstrass semigroup is the subsemigroup of $\mathbb{N}$ generated by 2 and $2g+1$ (denoted by $<2;2g+1>$).
It is the only Weierstrass semigroup of genus $g$ having first non-gap equal to 2.

From this point of view the next case is to consider Weierstrass points $P$ with first non-gap equal to three.
In this case the curve $C$ needs to have a base point free linear system $g^1_3$ containing $ 3P $, i.e. there exists a covering $f : C \rightarrow \mathbb{P}^1$ of degree 3 having $P$ as a total ramification point.
In case $g \geq 5$ then the linear system $g^1_3$ is unique.
However in general a $g^1_3$ does not need to have a total ramification point and if it has a total ramification point then in general it is unique.
Therefore the situation is different from the situation of hyperelliptic curve and the linear system $g^1_3$ does not determine all Weierstrass points on the curve.
Moreover in case there is a total ramification point $P$ then $\WS (P)$ is not completely determined by $g$ and in general not even by $f$
Therefore in case $f$ has at least two total ramification points then their Weierstrass semigroups can be different.

In \cite{ref6} all possibilities of combinations of Weierstrass semigroups with first non-gap equal to 3 that can occur on some fixed curve of genus $g \geq 5$ are determined.
In particular in case $P$ has Weierstrass semigroup $<3;3n+1>$ (in this case the genus of $C$ is equal to $3n$) then there is no other point $Q$ on $C$ with $\WS (Q)=<3;3n+1>$ (and this situation occurs).
It is mentioned at the introduction of \cite {ref3} that this fact is proved in \cite {ref7}.
It seems to me that this is not explicitly mentioned in that paper.
The computations in \cite {ref7} to obtain Theorem 6 of that paper imply that in case $C$ has genus $3n$ and there is a covering $f: C \rightarrow \mathbb{P}^1$ of degree 3 having $g+2$ total ramification points then exactly one of them has Weierstrass semigroup equal to $<3;3n+1>$.
From \cite{ref6} (and also from \cite{ref7}) it follows that for all other Weierstrass semigroups $H$ with first non-gap equal to 3 there exist curves $C$ having at least two points with Weierstrass semigroup equal to $H$.

We make the following definition
\begin{definition} \normalfont \label{definitieIntro}
Let $H$ be a Weierstrass semigroup of genus $g$. We say that $H$ occurs at most once in case there exists no curve $C$ of genus $g$ having two different Weierstrass points $P$ and $Q$ with $\WS (P)=\WS (Q)=H$.
\end{definition}
There is no Weierstrass semigroup with first non-gap equal to 2 that occurs at most once.
The Weierstrass semigroups with first non-gap equal to 3 are exactly the semigroups $<3;3n+1>$ with $n \geq 2$ an integer.
Its genus is equal to $3n$.

In \cite {ref3} the author gives a lot of Weierstrass semigroups $H$ of some genus $g$ with first non-gap some prime number $a$ that occur at most once.
As an example this result holds for semigroups $<a;ka-2>$ for any integer $k\geq 2$
More general from the arguments in \cite {ref3} it follows that for a prime number $a$ and $b=ka-r$ with $k \geq 2$ and $2 \leq r \leq a-1$ and $r \neq a-1$ in case $k=2$ there are at most $r-1$ Weierstrass points having Weierstrass semigroup equal to $<a;b>$  on a curve $C$ of genus $g=(a-1)(b-1)/2$ (this is indeed the number of non-gaps in case the Weierstrass semigroup is equal to $<a;b>$).
In case $r \neq 2$ this upper bound is not sharp.
In particular in \cite {ref4} , Theorem 1, it is proved that in case $a \geq 5$ is any odd integer then $<a;a+2>$ occurs at most once.
This is smaller than the bound obtained in \cite{ref3} in case $a \geq 5$ is a prime number.

One of the main results of this paper is the following theorem.

\begin{theoremA} \normalfont \label    {theoremA}
Let $a; b$ be relatively prime integers (we denote it by $(a;b)=1$) such that $b \geq a+2$.
Assume $b=ka+r$ with $1 \leq r \leq a-2$.
The Weierstrass semigroup $<a;b>$ (having genus $(a-1)(b-1)/2$) occurs at most once.
\end{theoremA}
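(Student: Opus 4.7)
Suppose for contradiction that $P\neq Q$ are two points of $C$ both having Weierstrass semigroup $\langle a,b\rangle$. The goal is to exhibit a function on $C$ whose pole at $Q$ has an order outside $\langle a,b\rangle$, contradicting $\WS(Q)=\langle a,b\rangle$.

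Since $a$ is the first positive nongap at both $P$ and $Q$, both $|aP|$ and $|aQ|$ are base-point-free pencils of degree $a$. When $k\ge 2$, one has $g=(a-1)(b-1)/2\ge a(a-1)>(a-1)^2$, so the Castelnuovo--Severi inequality forces $|aP|=|aQ|$; when $k=1$ (so $a+2\le b\le 2a-2$) a more delicate argument, exploiting the Weierstrass structure at $P$ and $Q$, is needed to reach the same conclusion. In either case $aP\sim aQ$, and we fix $x\in K(C)$ with $\operatorname{div}(x)=aQ-aP$, giving a degree-$a$ morphism $\phi_x:C\to\mathbb{P}^1$ totally ramified at $P$ (over $\infty$) and $Q$ (over $0$). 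Choose $y\in K(C)$ regular off $P$ with $v_P(y)=-b$, which exists because $b\in\WS(P)$.

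The next step is to normalize $y$ so that $v_Q(y)=a-r$. For any $f$ regular off $P$ with $v_P(f)=-b$, the quotient $f/x^{k+1}$ satisfies $v_P(f/x^{k+1})\ge a(k+1)-b=a-r\ge 2$, so it is regular off $Q$; when nonconstant, its pole at $Q$ of order $a(k+1)-v_Q(f)$ must lie in $\WS(Q)=\langle a,b\rangle$. The nongaps of $\langle a,b\rangle$ not exceeding $a(k+1)$ are exactly $\{0,a,2a,\ldots,ka,b,(k+1)a\}$, which (together with $v_Q(f)\le b$) forces
\[
v_Q(f)\in\{0,a,2a,\ldots,ka\}\cup\{a-r\}.
\]
When $v_Q(y)=ja$ for some $0\le j\le k$, subtract a suitable scalar multiple of $x^j$ from $y$ to strictly raise $v_Q(y)$ while keeping $v_P(y)=-b$. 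Iterating, one either reaches $v_Q(y)=a-r$ or is forced to conclude that $y$ is a polynomial in $x$, contradicting $v_P(y)=-b$. We may therefore assume $v_Q(y)=a-r$.

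For the key computation, set
\[
n=\left\lceil\frac{a}{a-r}\right\rceil,\qquad m=\left\lceil\frac{nr}{a}\right\rceil.
\]
Using $1\le r\le a-2$, one verifies $2\le n\le a-1$ and $m\le n-1$. Consider
\[
F\;=\;\frac{y^n}{x^{nk+m}}.
\]
A direct valuation count gives $v_P(F)=ma-nr\in[0,a-1]$, and $v_Q(F)=-N$, where $N:=nb-(n-m)a>0$. Thus $F$ is regular on $C\setminus\{Q\}$ with a pole of order $N$ at $Q$, forcing $N\in\WS(Q)=\langle a,b\rangle$. On the other hand, because $\gcd(a,r)=1$ and $0<n<a$, the smallest element of $\langle a,b\rangle$ congruent to $nr\pmod a$ is exactly $nb$; since $N\equiv nr\pmod a$ and $N=nb-(n-m)a<nb$, we conclude $N\notin\langle a,b\rangle$, the desired contradiction. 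Hence $P=Q$.

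\textbf{Main obstacle.} The principal technical hurdle is the verification of $aP\sim aQ$ when $k=1$: in that regime Castelnuovo--Severi does not give uniqueness of the $g^1_a$, so one must argue separately (for instance, by analyzing the ramification divisor of any would-be second $g^1_a$). Once $x$ and $y$ are fixed, the remaining steps amount to a transparent valuation calculation inside $K(C)=K(x,y)$.
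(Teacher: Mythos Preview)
Your second half --- once $aP\sim aQ$ is established --- is correct and is a genuinely different route from the paper's. The paper uses a plane model $\Gamma$ of degree $b$: since $g=(a-1)(b-1)/2$ forces $\Gamma$ to be smooth away from $\phi(P)$, the multiplicity of $\Gamma$ at $\phi(Q)$ is $\mu=1$, and Lemma~\ref{lemma1} then yields $ka+(a-1)\in\WS(Q)$, forcing $r=a-1$. You instead work purely with valuations in $K(C)$: after normalizing $y$ so that $v_Q(y)=a-r$, the function $F=y^{n}/x^{nk+m}$ has pole order $N=nb-(n-m)a$ at $Q$, which lies strictly below the Ap\'ery element $nb$ in its residue class modulo $a$ and hence outside $\langle a,b\rangle$. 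This is clean and self-contained; it never invokes the genus directly, only the hypothesis $\WS(Q)=\langle a,b\rangle$. (A minor quibble: the alternative in your normalization step is not literally ``$y$ is a polynomial in $x$''; rather, the finite list of admissible values for $v_Q$ is exhausted, which is already the desired contradiction.)

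The gap is entirely in the first half. For $k=1$ you explicitly leave the uniqueness of the $g^1_a$ unproved, so the theorem is not established in that range. But your $k\ge 2$ case is also incomplete when $a$ is composite: Castelnuovo--Severi with $g>(a-1)^2$ only tells you that the product map $C\to\mathbb{P}^1\times\mathbb{P}^1$ factors through some $h\colon C\to C'$ of degree $e>1$ dividing $a$; it does \emph{not} force $|aP|=|aQ|$, since the two pencils may descend to distinct $g^1_{a/e}$'s on $C'$. Ruling this out requires real work. The paper does exactly this in Lemma~\ref{lemmaX} and Proposition~\ref{proposition1}: from the factorization one deduces (via canonically adjoint curves on $\mathbb{P}^1\times\mathbb{P}^1$) that $(\tfrac{a}{e}-1)a+ie\in\WS(P)$ for all $i\ge 1$ and some proper divisor $e$ of $a$, and a short divisibility argument then shows these cannot all lie in $\langle a,b\rangle$ unless $b=a+1$. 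That single argument handles all $k\ge 1$ uniformly and would close both of your gaps.
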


In case $b=a+1$ or $r=d-1$ then there exist smooth curves of genus $(a-1)(b-1)/2$ having more than one Weierstrass point with Weierstrass semigroup equal to $<a;b>$.
The proofs in \cite{ref3} consist of two steps.
Under the assumptions of \cite{ref3} (a.o. $a$ is a prime number) the linear system $g^1_a$ is unique on the curve.
Then given some fixed linear system $g^1_a$ on the curve, the author proves the upper bound on the number of total ramification points of $g^1_a$ having Weierstrass semigroup $<a;b>$.
In case $(a;b)=1$ and $b \neq a+2$, the uniqueness of $g^1_a$ in case a curve $C$ of genus $(a-1)(b-1)/2$ has a Weierstrass point $P$ with $\WS(P) =<a;b>$ follows from results  from \cite{ref2} (see Theorem 10.1 for the relation).
However we give an independent proof inspired by \cite{ref3} but using seemingly easier arguments and not using the assumption that $a$ is a prime number.
So to prove Theorem 1, we only need to consider total ramification points on a fixed $g^1_a$.
Using more complicated computations than ours, Theorem \ref{theoremA} is proved in \cite{ref8} for the case of Galois Weierstrass points (meaning the morphism $C \rightarrow \mathbb{P}^1$ defined by $\vert aP \vert$ defines a Galois extension $\mathbb{C}(\mathbb{P}^1) \subset \mathbb{C}(C)$).

Smooth curves $C$ having a Weierstrass point $P$ with $\WS (P)=<a,b>$ in case $(a,b)=1$ are also called $C_{a,b}$ curves.
They are studied from different points of view (see e.g. \cite {ref10}, \cite {ref11}, \cite {ref12}, \cite {ref13}, \cite {ref14}).
In \cite {ref15} and \cite {ref16} the similar nodal curves are used to develop a general method to study Weierstrass points.

For lower genus cases $g<(a-1)(b-1)/2$ with $(a;b)=1$ and $b=na+r$ with $n \geq 1$ and $1 \leq r \leq a-1$ we consider the following situation.
Let $C$ be a smooth curve of genus $g$ and let $P \in C$ such that $a$ is the first non-gap of $P$, $b$ is the first non-gap of $P$ that is not a multiple of a and there are no other non-gaps between $na$ and $(n+1)a$.
We obtain sufficient conditions in terms of $\WS (P)$ implying the uniqueness of the linear system $g^1_a$ (this cannot be obtained using the results from \cite{ref2}).
In particular in case $b$ is much larger than $a$ then $g^1_a$ is unique (independent from the value of $g$).

We concentrate on points $Q$ on $C$ with $Q \neq P$ such that $aQ \in \vert aP \vert$ and we obtain the following theorem in this described situation.
\begin{theoremB} \normalfont \label{theoremB}
If $g>(a-1)(b-a+r)/2$ then $b \notin WG(Q)$.
\end{theoremB}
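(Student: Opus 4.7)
I argue by contrapositive: assume $b \in \WS(Q)$ and derive the bound $g \leq (a-1)(b-a+r)/2$.

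Since $aQ \sim aP$, there is a function $f$ with divisor $(f) = aQ - aP$; this realises the pencil $|aP|$ as a degree-$a$ morphism $f: C \to \mathbb{P}^1$ having both $P$ and $Q$ as total ramification points. Since $b \in \WS(P)$, one has a function $g_P$ with pole divisor $bP$; by the assumption $b \in \WS(Q)$, there is likewise $g_Q$ with pole divisor $bQ$.

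The strategy is to exhibit many linearly independent sections in $H^0(bP+bQ)$ and convert this, via Riemann--Roch, into an upper bound on $g$. The four families $\{f^i\}_{0 \leq i \leq n}$, $\{f^{-j}\}_{1 \leq j \leq n}$, $\{g_P f^{-k}\}_{0 \leq k \leq n}$, $\{g_Q f^l\}_{0 \leq l \leq n}$, together with the product $g_P g_Q$, provide $4n+4$ candidates in $H^0(bP+bQ)$. Each is distinguished by its unique pair of pole orders at $(P,Q)$, so they are linearly independent.

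Riemann--Roch gives $\ell(bP+bQ) = 2b + 1 - g + \ell(K - bP - bQ)$, and plugging in the lower bound $4n+4$ yields a first estimate. To sharpen this to the stated inequality, one must analyse the correction term $\ell(K-bP-bQ)$: the hypothesis that $b$ is the only non-gap of $P$ strictly between $na$ and $(n+1)a$ constrains the orders at $P$ of canonical differentials, and a parallel constraint holds at $Q$ since $b \in \WS(Q)$. An iterative argument, which shifts orders between $P$ and $Q$ by multiplying by $f^{\pm 1}$, then pins down $\ell(K-bP-bQ)$ closely enough to conclude $g \leq (a-1)(b-a+r)/2$.

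The main obstacle is precisely this control of $\ell(K-bP-bQ)$: the counts of differentials vanishing to order $\geq b$ at $P$ or at $Q$ individually are easily read off from the gap sequences, but bounding their \emph{intersection} requires pairing each such differential with its order at the other point. This bookkeeping is delicate because, beyond the common sub-semigroup $\langle a, b \rangle$ forced by $f$ and the existence of $g_P, g_Q$, one has no a priori relation between $\WS(P)$ and $\WS(Q)$; the cover $f$ is the only bridge, and exploiting it sharply is the technical heart of the argument.
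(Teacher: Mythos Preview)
Your plan has a genuine gap. The $4n+4$ sections you exhibit give, via Riemann--Roch, only $g \le 2b - 4n - 3 + \ell(K-bP-bQ)$; since $2b-4n-3$ is linear in $a$ while the target $(a-1)(b-a+r)/2$ is quadratic in $a$ (for fixed $n\ge 2$), essentially the entire content of the bound would have to come from your control of the speciality $\ell(K-bP-bQ)$. You defer this to an unspecified ``iterative argument'' and in the same breath concede that beyond the common sub-semigroup $\langle a,b\rangle$ you have no a priori relation between $\WS(P)$ and $\WS(Q)$---but such a relation is exactly what is needed. Nor can you meaningfully enlarge your list of sections: any monomial $g_P^{\,j} f^{-k}$ with $j\ge 2$ already has pole order at $P$ exceeding $b$ for every $k\le n$, and the $g_Q$ side is symmetric. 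So the section-counting strategy on $H^0(bP+bQ)$ stalls at the outset, and what you label ``delicate bookkeeping'' is in fact the whole theorem.

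The paper's argument is geometric and sidesteps this difficulty entirely. One takes the degree-$b$ plane model $\Gamma\subset\mathbb P^2$ associated with $|bP|$: then $\phi(P)$ is a cusp of type $(b-a;b)$, and for any $Q$ with $aQ\in|aP|$ the image $\phi(Q)$ is a cusp of type $(\mu;a)$, where $\mu$ is the unique integer $0<\mu<a$ for which $|bP-\mu Q|$ does not have $Q$ as a base point. A short direct computation (the paper's Lemma~\ref{lemma1}) shows $na+(a-\mu)\in\WS(Q)$; under the standing hypothesis that there is only one non-gap between $na$ and $(n+1)a$, this forces $\mu=a-r$. The $\delta$-invariant of a cusp of type $(a-r;a)$ equals $(a-1)(a-r-1)/2$, whence $g\le (a-1)(b-1)/2-(a-1)(a-r-1)/2=(a-1)(b-a+r)/2$. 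The idea you are missing is precisely this plane-model interpretation: the multiplicity $\mu$ of $\Gamma$ at $\phi(Q)$ is the bridge between $|bP|$ and the local structure at $Q$, and it produces the quadratic genus drop immediately, with no speciality term to estimate.
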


Therefore for large values of $b$ with respect to $a$ we obtain a lot of Weierstrass semigroups that can occur at most once.
Moreover we prove that in many cases this bound on the genus in Theorem B is sharp.
This means in case $g=(a-1)(b-a+r)/2$ then there exists a Weierstrass semigroup $H_0$ of genus $g$ containing $<a;b>$ and a curve $C$ of genus $g$ having two Weierstrass points with semigroup $H_0$.
Moreover this semigroup is unique.

In Section 2 we mention some general results.
In particular Lemma \ref{lemmaX} will be the basic lemma for obtaining the unicity of the pencil $g^1_a$.

In Section 3 we prove the main results of this paper.
It starts with a very easy Lemma \ref{lemma1} which is the basic observation of all our main results.
Assume $C$; $P$; $a$ and $b$ as before.
Using a particular plane model $\Gamma$ of the curve then it follows that equality $\WS (P)=\WS (Q)$ in case $aQ \in \vert aP \vert$ implies $Q$ corresponds to a particular type of singular point on $\Gamma$.
In particular it follows $\WS (P) \neq \WS (Q)$ in case $(a;b)=1$, $g=(a-1)(b-1)/2$ and $r \neq a-1$ (see Corollary \ref{corollary1}).
In case $b \neq a+1$ we also obtain uniqueness of $g^1_a$ in that case (Proposition \ref{proposition1}) implying Theorem A.
More general we also obtain Theorem B (Corollary \ref{corollary2}).
We also give some general statements on the uniqueness of $g^1_a$ in case $g<(a-1)(b-1)/2$  (see Proposition \ref{proposition2} and Corollaries \ref{corollary10} and \ref{corollary7}).

Using Lemma \ref{lemma1} in a more detailled manner we obtain a description for $\WS (Q)$ for all $Q \neq P$ satisfying $aQ \in \vert aP \vert$ in case $g=(a-1)(b-1)/2$ (Theorem \ref{theorem3}).
Continuing to use such arguments we obtain a list of non-gaps $\WS (P)$ needs to contain in order that there exists $Q$ satisfying $aQ \in \vert aP \vert$ with $\WS (P)=\WS (Q)$ in case $g<(a-1)(b-1)/2$ (Lemma \ref{lemma8}).
From this fact we obtain further conditions on $\WS (P)$ going below the genus bound of Theorem B and implying $\WS (P)$ occurs at most once (Corollary \ref{corollaryY}).
Moreover it also implies the genus bound in Theorem B is sharp in general (Corollary \ref{corollary6} and Lemma \ref{lemma5}) and it gives a complete description of the Weierstrass semigroup implying this sharpness (Corollary \ref{corollary5} as a corollary of Lemma \ref{lemma3}).

In Section 4 we consider some examples.
In case $a=4$ we show that for each integer $N$ there exists a genus bound $g(N)$ such that for $g > g(N)$ there are at least $N$ different Weierstrass semigroups with first non-gap equal to 4 and genus $g$ that occur at most once (remember in case $a=3$ this is not true). Those Weierstrass semigroups are very similar to each other.
Case $a=5$ illustrates that for growing values of $a$ we obtain more types of Weierstrass gap sequences that occur at  most once.
Case $a=6$ illustrates that the use of Lemma \ref{lemmaX} causes that making a formulation of Theorem B similar to Theorem A without assuming $aQ \in \vert aP \vert$ is not possible using the arguments of this paper.
Finally in case $n=1$ the genus bound in Theorem B is too small to obtain uniqueness of $g^1_a$.
Using a very rough but different argument we show how to obtain a result on Weierstrass semigroups that occur at most once in this case $n=1$ that satisfies sharpness on the genus bound which is larger than the genus bound in Theorem B.
This argument cannot be applied in case $n \geq 2$.

For two positive integers $a$ and $b$ we write $(a,b)$ to denote their largest common divisor.
In particular $(a,b)=1$ means $a$ and $b$ are mutually prime.
Remember we write $<a;b>$ to denote the subsemigroup of $\mathbb{N}$ generated by $a$ and $b$.
For a smooth projective variety $X$ we write $\omega _X$ to denote the canonical sheaf of $X$.

\section{Generalities}\label{section2}

We are going to use some models of the smooth curve $C$ on some surfaces.
We use the following terminology and facts.

Let $X$ be a smooth surface and let $D$, $E$ be two curves on $X$ without common components.
For $Q \in D \cap E$ we write $i(D.E;Q)$ to denote the intersection multiplicity of $D$ and $E$ at $Q$.
We also write $(D.E)$ to denote the intersection number of $D$ and $E$ on $X$.

Let $X$ be a smooth surface and let $\Gamma$ be an irreducible curve on $X$.
This curve $\Gamma$ has some arithmetic genus $p_a(\Gamma)$ and it can be computed by the formula $2p_a(\Gamma)-2=\Gamma . \left( \Gamma + K_X  \right)$ with $K_X$ a canonical divisor on $X$. In case $\Gamma$ is smooth then this arithmetic genus is equal to the genus of the smooth curve $\Gamma$.

Let $Q$ be a point on $\Gamma$ of multiplicity $\nu$ and let $p : X' \rightarrow X$ be the blowing-up of $X$ at $Q$.
Let $E$ be the associated exceptional divisor on $X'$ and let $\Gamma '$ be the proper transform of $\Gamma$ on $X'$.
It is well-known that $p_a(\Gamma ')=p_a(\Gamma)-\nu (\nu -1)/2$.
In case $\Gamma '$ has some singular points on $E$ one continues this process blowing-up $X'$ at the singular points of $\Gamma '$ on $E$ (such points are called infinitesimally near points on $X$ and infinitesimally near singular points of $\Gamma$) and so on untill one obtains a smooth surface $X_1$ such that for the proper transform $\Gamma _1$ of $\Gamma$ on $X_1$ all points mapping to $Q$ are smooth.
The difference $p_a(\Gamma)-p_a(\Gamma ')$ is denoted by $\delta (Q)$.

\begin{definition} \normalfont \label{definitieX}
We say $Q$ is a cusp on an irreducible curve $\Gamma \subset \mathbb{P}^2$ in case for the normalisation $C \rightarrow \Gamma$ there is only one point of $C$ mapping to $Q$  (i.e. $\Gamma$ is locally analytically irreducible at $Q$). Let $\nu$ be the multiplicity of $\Gamma$ at $Q$. There is a unique line $T$ on $\mathbb{P}^2$ containing $Q$ such that $i(T.\Gamma;Q)=\mu > \nu$. We say $Q$ is a cusp of type $(\nu ; \mu)$ on $\Gamma$.
\end{definition}

The following lemma should be well-known.
\begin{lemma} \normalfont \label{lemma4}
Let $\Gamma \subset \mathbb{P}^2$ be an irreducible plane curve and assume $Q$ is a cusp of type $(\nu; \mu)$  on $\Gamma$. In case $(\nu,\mu )=1$ and $Q$ is a cusp of type $(\nu ;\mu)$ then $\delta_Q=\frac{(\nu-1)(\mu -1)}{2}$.
\end{lemma}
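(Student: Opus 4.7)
I would prove this by induction on $\nu+\mu$, exploiting the fact (recalled just above the lemma) that blowing up the cusp $Q$ drops the arithmetic genus by exactly $\nu(\nu-1)/2$ and produces a unique infinitely near singular point $Q'$ on the proper transform, whose $\delta$-invariant then accounts for the remaining drop. The base case is $\nu = 1$, where $Q$ is already smooth and $\delta_Q = 0 = (1-1)(\mu-1)/2$. The inductive recipe will mirror the Euclidean algorithm on the pair $(\nu,\mu)$: one blow-up replaces it by $(\nu, \mu-\nu)$ when $\mu > 2\nu$ and by $(\mu-\nu, \nu)$ when $\nu < \mu < 2\nu$, and in both situations coprimality is preserved because $(\nu, \mu-\nu) = (\nu,\mu) = 1$.

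For the inductive step, I would choose affine coordinates $(x,y)$ at $Q$ so that the tangent line $T$ is $\{y=0\}$. Since $Q$ is unibranch of multiplicity $\nu$ with $i(T.\Gamma;Q)=\mu$, after a suitable change of local parameter the branch admits an analytic parametrization of the form $x = t^\nu$, $y = t^\mu + (\text{higher order terms in } t)$. Then I would blow up $Q$: in the chart where $y = xv$, the proper transform $\Gamma'$ is parametrized by $(x,v) = (t^\nu, t^{\mu-\nu}+\cdots)$ and meets the exceptional divisor at a single point $Q'$. Comparing valuations of linear forms at $Q'$ shows that $Q'$ is again a cusp, of type $(\nu, \mu-\nu)$ if $\mu > 2\nu$ and of type $(\mu-\nu, \nu)$ if $\nu < \mu < 2\nu$; the case $\mu = 2\nu$ is ruled out since $(\nu,\mu) = 1$ forces $\nu = 1$, already the base case.

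To conclude, the recursion $\delta_Q = \delta_{Q'} + \nu(\nu-1)/2$ combines with the inductive hypothesis
\[
\delta_{Q'} = \frac{(\nu-1)(\mu-\nu-1)}{2}
\]
(and the symmetric expression in the other case) to yield
\[
\delta_Q = \frac{(\nu-1)(\mu-\nu-1)}{2} + \frac{\nu(\nu-1)}{2} = \frac{(\nu-1)(\mu-1)}{2},
\]
as required. The only delicate step is the second paragraph: one must justify that the branch can really be put into the normal form $x=t^\nu$, $y=t^\mu+\cdots$ (which uses only that $Q$ is unibranch of multiplicity $\nu$ with tangent $y=0$ and $i(T.\Gamma;Q)=\mu$), and then confirm that the tangent direction and intersection number at $Q'$ give precisely the claimed new type. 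Once that parametrization is in hand, the rest of the argument is pure bookkeeping, and coprimality guarantees that the Euclidean descent terminates exactly in the base case $\nu=1$.
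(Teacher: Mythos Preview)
Your proof is correct and follows essentially the same approach as the paper: both resolve the cusp by iterated blow-ups, observe that the resulting multiplicity sequence is governed by the Euclidean algorithm on $(\nu,\mu)$, and sum the contributions $\nu(\nu-1)/2$. The only difference is organizational: you induct on $\nu+\mu$ one blow-up at a time, whereas the paper groups the blow-ups by Euclidean quotient (obtaining $n_i$ infinitely near points of multiplicity $c_i$ at each stage) and then verifies the closed form $\sum_i n_i c_i(c_i-1)/2 = (\nu-1)(\mu-1)/2$ by a separate telescoping induction.
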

\begin{proof}
Using blowings-up starting at $Q$ we obtain a sequence of singular points of $\Gamma$ infinitesimally near to $Q$ of known multiplicity as follows.
We make the sequence $(c_1; c_2; \cdots ; c_{k+1}=1)$ taking $c_1=\mu$ and $c_2=\nu$. Then $c_1=n_2c_2+c_3$ with $1 \leq c_3 \leq c_2-1$. In case $i \geq 3$ and $c_i \neq 1$ then $c_{i-1}=n_ic_i+c_{i+1}$ with $1 \leq c_{i+1} \leq c_i-1$.
This is the Euclidean algorithm to compute $(\nu,\mu)$.
Since $(\nu,\mu)=1$ one has $(c_i,c_{i+1})=1$ for all $1 \leq i \leq k$ and $c_{k+1}=1$.

Then for $1 \leq i \leq k$ there are $n_i$ singular points of multiplicity $c_i$ on the curve $\Gamma$ infinitesimally near to $Q$.
This implies
\[
\delta _Q=\sum_{i=2}^{k}n_i\frac{c_i(c_i-1)}{2} \text { .}
\]
For $2 \leq j \leq k$ let $\delta _j=\sum _{i=j}^{k}\frac{c_i(c_i-1)}{2}$.
By means of induction we show $\delta _j=\frac{(c_{j-1}-1)(c_j-1)}{2}$.
Since $\delta _Q=\delta _2$ this implies the lemma.

For $j=k$ we have $\delta_k=n_k\frac{c_k(c_k-1)}{2}$.
Also $c_{k-1}=n_kc_k+1$.
This implies $\delta_k=\frac{(c_{k-1}-1)(c_k-1)}{2}$.

Assume $3 \leq j \leq k$ and $\delta _j=\frac{(c_{j-1}-1)(c_j-1)}{2}$.
We have $\delta _{j-1}=n_{j-1}\frac {c_{j-1}(c_{j-1}-1)}{2}+\delta_j$.
We use $c_{j-2}=n_{j-1}c_{j-1}+c_j$ hence $\delta _{j-1}=\frac{(c_{j-2}-c_j)(c_{j-1}-1)}{2}+\frac{(c_{j-1}-1)(c_j-1)}{2}=\frac{(c_{j-1}-1)(c_{j-2}-1)}{2}$.
\end{proof}

Let $X$ be the surface $\mathbb{P}^1 \times \mathbb{P}^1$.
For each divisor $D$ on $X$ there exist unique integers $\alpha$ and $\beta$ such that $D$ is linearly equivalent to $\alpha \left( \mathbb{P}^1 \times \{ S \} \right)+\beta \left( \{ S \} \times \mathbb{P}^1 \right)$ (see e.g. \cite {ref9}, Chapter II, Example 6.6.1).
Such curve is said to be of type $(\alpha ;\beta )$ and we write $\vert (\alpha ;\beta ) \vert$ to denote the complete linear system of curves of type $(\alpha ;\beta )$. We write $\mathcal{O}_X(\alpha ;\beta )$ to denote the corresponding invertible sheaf.
For an irreducible curve $\Gamma$ on $X$ there exist so-called canonically adjoint curves to $\Gamma$ describing all elements of the canonical linear system on the normalisation of $\Gamma$.
Although this should be well-known we include an argument for this fact.

\begin{lemma} \normalfont \label{lemmaY}
Let $\Gamma$ be an irreducible curve of type $(\alpha ;\beta )$ on $X=\mathbb{P}^1 \times \mathbb{P}^1$ and let $C$ be the normalisation of $\Gamma$. Let $\vert K_C \vert$ be the canonical linear system on $C$. There exists a linear subsystem of $\vert (\alpha -2;\beta -2) \vert$ called the linear system of canonically adjoint curves of $\Gamma$ that has a natural bijective correspondence with $\vert K_C \vert$ using intersections.
\end{lemma}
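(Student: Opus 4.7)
The plan is to use the adjunction formula on $X = \mathbb{P}^1 \times \mathbb{P}^1$, whose canonical class is of type $(-2;-2)$, after resolving the singularities of $\Gamma$. If $\Gamma$ were already smooth, ordinary adjunction would give $\omega_C \cong \mathcal{O}_X(\alpha - 2;\beta - 2)\vert_\Gamma$ and mere restriction would yield the bijection. In general one expects the canonically adjoint subsystem to consist of curves of type $(\alpha - 2;\beta - 2)$ passing through the singular points of $\Gamma$ and their infinitesimally near singular points with prescribed multiplicities, with the correspondence still given by intersection on the smooth locus of $\Gamma$.

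To make this precise I would first take a resolution $\sigma : \tilde X \to X$, obtained as a composition of blowups at the singular points of $\Gamma$ and at its infinitesimally near singular points, so that the proper transform $\tilde\Gamma$ is smooth and isomorphic to $C$. Let $E_j$ be the total transforms on $\tilde X$ of the successive exceptional divisors, and let $m_j$ be the multiplicity of the corresponding proper transform of $\Gamma$ at the $j$-th centre. Standard formulas give
\[
K_{\tilde X} = \sigma^*K_X + \sum_j E_j, \qquad \sigma^*\Gamma = \tilde\Gamma + \sum_j m_j E_j,
\]
whence adjunction on $\tilde X$ yields
\[
K_C = \left.\left( \sigma^*(K_X + \Gamma) - \sum_j (m_j - 1) E_j \right)\right|_{\tilde\Gamma}.
\]
I would then define the canonically adjoint subsystem of $\vert (\alpha - 2;\beta - 2) \vert$ to consist of those $D$ for which $\sigma^* D \geq \sum_j (m_j - 1) E_j$, i.e.\ those $D$ passing through each infinitesimally near singular point with multiplicity at least $m_j - 1$. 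For such $D$, the effective divisor $(\sigma^* D - \sum_j (m_j - 1)E_j)\vert_{\tilde\Gamma}$ belongs to $\vert K_C \vert$ by the displayed formula, and on the open part where $\sigma$ is an isomorphism it coincides with the intersection divisor $D \cdot \Gamma$, giving the description announced in the lemma.

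It remains to verify bijectivity. Injectivity follows because the restriction $H^0(\mathcal{O}_X(\alpha - 2;\beta - 2)) \to H^0(\mathcal{O}_\Gamma(\alpha - 2;\beta - 2))$ has trivial kernel: a section of the former vanishing on $\Gamma$ would be divisible by the defining equation of $\Gamma$ of type $(\alpha;\beta)$, and hence would be a section of $\mathcal{O}_X(-2;-2) = 0$. For surjectivity I would argue by a dimension count: $h^0(\mathcal{O}_X(\alpha - 2;\beta - 2)) = (\alpha - 1)(\beta - 1) = p_a(\Gamma)$, so once one knows that the canonically adjoint conditions impose the expected $\sum_Q \delta_Q = p_a(\Gamma) - g$ independent linear conditions, the subsystem has dimension exactly $g - 1 = \dim \vert K_C \vert$ and, being contained in $\vert K_C \vert$, must equal it. The main obstacle is precisely this independence statement: one has to show that the conditions at a singular point $Q$ together with those at its infinitesimally near singular points cut out exactly $\delta_Q$ independent hyperplanes in $H^0(\mathcal{O}_X(\alpha - 2;\beta - 2))$ rather than fewer. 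This reduces to a local statement at each singularity and can be settled by an $H^1$-vanishing argument on $\tilde X$ for the ideal sheaf defining the adjoint conditions twisted by $\sigma^*\mathcal{O}_X(\alpha - 2;\beta - 2)$; the degenerate cases $\alpha \leq 1$ or $\beta \leq 1$, in which $C = \mathbb{P}^1$ and $\vert K_C \vert$ is empty, are trivial.
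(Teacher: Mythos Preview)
Your setup coincides with the paper's: both pass to a resolution $\sigma:\tilde X\to X$ (the paper writes $\pi:Y\to X$), identify the proper transform with $C$, and recognise that the relevant line bundle upstairs is $\omega_{\tilde X}(C)=\sigma^*\mathcal{O}_X(\alpha-2;\beta-2)\otimes\mathcal{O}_{\tilde X}\bigl(-\sum_j(m_j-1)E_j\bigr)$. Your injectivity argument via $H^0(\mathcal{O}_X(-2;-2))=0$ is equivalent to the paper's use of $H^0(\tilde X,\omega_{\tilde X})=0$.

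The divergence is in the surjectivity step, and here your proposal has a genuine (though self-acknowledged) gap. You reduce to the claim that the adjoint conditions impose exactly $p_a(\Gamma)-g$ independent linear conditions, call this ``the main obstacle'', and defer it to an unspecified $H^1$-vanishing. The paper bypasses the dimension count entirely: it applies the long exact cohomology sequence to the adjunction sequence
\[
0 \longrightarrow \omega_{\tilde X} \longrightarrow \omega_{\tilde X}(C) \longrightarrow \omega_C \longrightarrow 0
\]
on $\tilde X$, so that surjectivity of $H^0(\tilde X,\omega_{\tilde X}(C))\to H^0(C,\omega_C)$ follows immediately from $H^1(\tilde X,\omega_{\tilde X})=0$. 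That vanishing in turn is obtained from $H^1(\tilde X,\mathcal{O}_{\tilde X})=0$ by Serre duality, and the latter from $H^1(X,\mathcal{O}_X)=0$ by the invariance of $H^i(\mathcal{O})$ under blowups. This is precisely the $H^1$-vanishing you were reaching for; once you name it, the condition-counting detour becomes unnecessary, since the exact sequence already yields the isomorphism $H^0(\omega_{\tilde X}(C))\cong H^0(\omega_C)$ without ever invoking $h^0(\mathcal{O}_X(\alpha-2;\beta-2))=p_a(\Gamma)$ or $\sum_Q\delta_Q$. So your outline is correct but incomplete, and the paper's route is the natural and shorter way to close it.
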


\begin{proof}
Let $\pi : Y \rightarrow X$ be a sequence of blowings-up at some points (some of them might be infinitesimally near points) such that the proper transform of $\Gamma$ on $Y$ is smooth (so we identify  it with $C$).
It is well-known that $H^i(X;\mathcal{O}_X) \cong H^i(Y;\mathcal{O}_Y)$ for all $i \geq 0$ (see \cite {ref9}, Chapter V, Proposition 3.4).
Since $H^1(X;\mathcal{O}_X)=0$ (see \cite {ref9}, Chapter III, Exercise 5.6) one has $H^1(Y;\mathcal{O}_Y)=0$.
Canonical divisors on $X$ are of type $(-2;-2)$ (see \cite {ref9}, Chapter II, Exercise 8.20.3).
From Serre duality (we use \cite {ref9}, Chapter III, Corollary 7.7) it follows $H^2(X;\mathcal{O}_X) \cong H^0(X;\mathcal{O}_X(-2;-2))=0$ and therefore $H^2(Y;\mathcal{O}_Y)=0$ and also $H^0(Y;\omega _Y)=0$ and $H^1(Y;\omega _Y)=0$.
From \cite{ref9}, Chapter II, Proposition 8.20 we know $\omega _C\cong \omega_Y \otimes \mathcal{O}_Y \otimes \mathcal{O}_C$.
Tensoring the exact sequence
\[
0 \rightarrow \mathcal{O}_Y(-C) \rightarrow \mathcal{O}_Y \rightarrow \mathcal{O}_C \rightarrow 0
\]
with $\mathcal{O}_Y(C) \otimes \omega _Y$ gives rise to the exact sequence
\[
0 \rightarrow \omega _Y \rightarrow \omega _Y \otimes \mathcal{O}_Y(C) \rightarrow \omega _C \rightarrow 0
\]
Using the exact cohomology sequence we obtain an isomorphism $H^0(Y;\omega _Y\otimes \mathcal{O}_Y(C))\rightarrow H^0(C;\omega _C)$.
The images on X of elements of the complete linear system associated to $\omega_Y \otimes \mathcal{O}_Y(C)$ are the canonically adjoint curves of $\Gamma$.
From the construction it follows they are contained in $\vert (\alpha -2;\beta -2) \vert$ (this follows from an explicit description of $\omega _Y$ using the blowings-up (see \cite {ref9}, Chapter V, proposition 3.3)) and from the proof it follows they are in bijective correspondence to effective canonical divisors on $C$.
\end{proof}

Unicity of a linear system $g^1_a$ will be a consequence of the following lemma.

\begin{lemma} \normalfont \label{lemmaX}
Let $C$ be a smooth curve of genus $g$ and let $P$ be a point of $C$ with first non-gap equal to $a$.
Assume $C$ has a base point free linear system $g^1_a$ different from $\vert aP \vert$.
There there exists a divisor $e<a$ of $a$ (it might be $1$) such that each integer $\left( \frac{a}{e}-1 \right)a+ie$ with $i \in \mathbb{Z}_{\geq 1}$ is a non-gap of $P$.
\end{lemma}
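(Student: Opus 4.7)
The plan is to construct the divisor $e$ geometrically from the two pencils, reduce the claim to a conductor bound on the normalisation of the image curve in $\mathbb{P}^1 \times \mathbb{P}^1$, and then prove that bound by an explicit construction of functions. Concretely, let $\mathfrak{g}$ denote the given $g^1_a$ distinct from $|aP|$. Choose the degree-$a$ morphisms $f : C \to \mathbb{P}^1$ defined by $|aP|$ (so $f^{-1}(\infty) = aP$) and $h : C \to \mathbb{P}^1$ defined by $\mathfrak{g}$, after pre-composing with an automorphism of $\mathbb{P}^1$ so that $h(P) = \infty$. Form $\phi = (f,h) : C \to \mathbb{P}^1 \times \mathbb{P}^1$ and let $\Gamma$ be its image. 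Setting $e := \deg(\phi : C \to \Gamma)$ and $\alpha := a/e$, the factorisations $f = \pi_1 \circ \phi$ and $h = \pi_2 \circ \phi$ force $\Gamma$ to have bidegree $(\alpha,\alpha)$. I would next argue $\alpha \geq 2$: otherwise $\Gamma$ is a $(1,1)$-curve, i.e.\ the graph of an automorphism of $\mathbb{P}^1$, making $h$ the composition of $f$ with that automorphism and hence $\mathfrak{g} = |aP|$, contradicting the hypothesis. This supplies the divisor $e < a$ of $a$ required by the statement.

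The second step transfers the problem to the normalisation $\tilde{\Gamma}$ of $\Gamma$. Let $\tilde\phi : C \to \tilde{\Gamma}$ be the lift of $\phi$ and write $Q := \tilde\phi(P)$. Since $\Gamma \cap (\{\infty\} \times \mathbb{P}^1)$ equals $\alpha \cdot (\infty,\infty)$ as a divisor on $\Gamma$ and $f^{-1}(\infty) = aP$, the point $(\infty,\infty)$ has a unique preimage $Q \in \tilde{\Gamma}$ and $\tilde\phi$ is totally ramified of index $e$ at $P$. Consequently, for every $F \in \mathcal{O}_{\tilde\Gamma}(\tilde\Gamma \setminus \{Q\})$ with a pole of order $n$ at $Q$, the pullback $\tilde\phi^{*}F$ lies in $\mathcal{O}_C(C \setminus \{P\})$ with pole of order $en$ at $P$, so $e \cdot \WS_{\tilde\Gamma}(Q) \subseteq \WS(P)$. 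Because $(\tfrac{a}{e}-1)a + ie = e[(\alpha-1)\alpha + i]$, it suffices to prove every integer $\geq (\alpha-1)\alpha + 1$ belongs to $\WS_{\tilde\Gamma}(Q)$, i.e.\ that the conductor of $\WS_{\tilde\Gamma}(Q)$ is at most $(\alpha-1)\alpha$.

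The last and hardest step is establishing this conductor bound on $\tilde{\Gamma}$. Let $k$ be the intersection multiplicity of $\Gamma$ with $\mathbb{P}^1 \times \{\infty\}$ at $\phi(P)$; the same ``otherwise $\mathfrak{g} = |aP|$'' argument used in Step 1 gives $1 \leq k \leq \alpha - 1$. On $\tilde{\Gamma}$, $x$ has pole of order $\alpha$ at $Q$ (and no other pole), while $y$ has pole of order $k$ at $Q$ together with simple poles at the $\alpha - k$ auxiliary points $S_1,\dots,S_{\alpha - k}$ of $\Gamma \cap (\mathbb{P}^1 \times \{\infty\})$ different from $\phi(P)$. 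Every function in $\mathcal{O}_{\tilde\Gamma}(\tilde\Gamma \setminus \{Q\})$ can be written as $\sum_{j=0}^{\alpha - 1} p_j(x)\, y^j$ with $p_j \in \mathbb{C}[x]$, subject at each $S_\ell$ to $\alpha - 1$ linear conditions on the tuple $(p_0,\dots,p_{\alpha-1})$ ensuring the principal part vanishes there. For each target pole order $n \geq (\alpha - 1)\alpha + 1$ at $Q$, I would choose the degrees of the $p_j$ so that the regularity conditions are satisfiable and the dominant monomial has pole order $\alpha \deg p_j + jk = n$. The main obstacle is this combinatorial step when $\gcd(\alpha, k) > 1$: the pole orders of individual monomials $x^i y^j$ then lie in a proper sub-semigroup of $\mathbb{Z}$, and one must exploit genuine cancellations between different $p_j(x)y^j$ terms at the auxiliary points $S_\ell$ to produce pole orders in the residue classes not hit by single monomials, continuing all the way down to the conductor $(\alpha-1)\alpha$.
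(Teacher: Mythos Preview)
Your reduction matches the paper's exactly: map to $\mathbb{P}^1\times\mathbb{P}^1$ via the two pencils, let $e$ be the degree onto the image $\Gamma$ of bidegree $(\alpha,\alpha)$ with $\alpha=a/e\ge 2$, pass to the normalisation, and observe that it suffices to show every integer $\ge\alpha(\alpha-1)+1$ is a non-gap of the image point $Q$ on $\tilde\Gamma$. The paper does precisely this (writing $C'$ for your $\tilde\Gamma$ and $P'$ for your $Q$).

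The divergence, and the gap, is your third step. You propose to manufacture a function of each required pole order by hand as a combination $\sum_{j} p_j(x)\,y^{j}$, but you yourself flag the obstruction when $\gcd(\alpha,k)>1$: the pole orders of the individual monomials $x^{i}y^{j}$ then miss entire residue classes modulo $\alpha$, and the ``genuine cancellations'' at the auxiliary points $S_\ell$ that you say must be exploited are never produced. This is not a detail one can wave through; without it the argument is incomplete, and it is not clear how to finish along these lines without importing geometric input of roughly the same strength as what the paper uses. The paper sidesteps the whole computation with a non-speciality argument. By Lemma~\ref{lemmaY} the canonically adjoint curves of $\Gamma$ lie in $\vert(\alpha-2,\alpha-2)\vert$; a general fibre of $\vert\alpha Q\vert$ consists of $\alpha$ distinct points on a ruling $\ell\in\vert(1,0)\vert$, and since $\ell\cdot(\alpha-2,\alpha-2)=\alpha-2<\alpha$, any adjoint containing that fibre must contain $\ell$ as a component. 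Consequently no adjoint can contain $\alpha-1$ general fibres, so no effective canonical divisor on $\tilde\Gamma$ dominates $(\alpha-1)\alpha Q$, i.e.\ $(\alpha-1)\alpha Q$ is non-special. Riemann--Roch then yields every integer $\ge\alpha(\alpha-1)+1$ as a non-gap of $Q$ in one stroke, with no dependence on $k$ at all.
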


\begin{proof} Let $f_1 : C \rightarrow \mathbb{P}^1$ be a morphism corresponding to $\vert aP \vert$ and let $f_2 : C \rightarrow \mathbb{P}^1$ be a morphism corresponding to $g^1_a$.
Consider the morphism $f=(f_1;f_2) : C \rightarrow \mathbb{P}^2 \times \mathbb{P}^1$ and let $\Gamma$ be the image of $f$.
Let $C'$ be the normalisation of $\Gamma$, then $f$ factorizes through a finite morphism $h : C \rightarrow C'$ of some degree $e<a$ dividing $a$ ($e$ might be equal to 1).
The rulings of $\mathbb{P}^1 \times \mathbb{P}^1$ imply base point free linear systems $g_1$ and $g_2$ on $C'$ such that $h^{-1} (g_1)=\vert aP \vert$; $h^{-1} (g_2)=g^1_a$.
In particular for $P'=h(P) \in C'$ one has $\frac{a}{e}P' \in g_1$.
The canonically adjoint curves of $\Gamma$ give rise to a linear subsystem of $\vert (\frac{a}{e}-2; \frac{a}{e}-2) \vert$ and they correspond bijectively with effective canonical divisors on $C'$

Let $E$ be a general element of $\vert \frac{a}{e}P' \vert$.
An effective canonical divisor on $C'$ containing $E$ corresponds to some curve $\gamma$ in $\vert (\frac{a}{e}-2;\frac{a}{e}-2) \vert$ containing $E$.
This divisor $E$ consists of $\frac{a}{e}$ different points on some line $l$ belonging to $\vert (1;0) \vert$.
Since the intersection number $\left( (\frac{a}{e}-2;\frac{a}{e}-2).(1;0) \right)=\frac{a}{e}-2<\frac{a}{e}$ it follows $l \subset \gamma$.
This implies there is no canonically adjoint curve of $\Gamma$ containing $\frac{a}{e}-1$ general elements of $\vert \frac{a}{e}P' \vert$.
Therefore no effective canonical divisor of $C'$ contains $\frac{a}{e}-1$ general elements of $\vert \frac{a}{e}P' \vert$. It follows $(\frac{a}{e} -1)\frac{a}{e}P'$ is a non-special divisor on $C'$.
This implies for each $i \in \mathbb{Z}_{\geq 1}$ the integer $(\frac{a}{e}-1)\frac{a}{e}+i$ is a non-gap of $P'$.
Using the inverse image under the morphism $f_1$ one obtains $(\frac{a}{e}-1)a+ie$ is a non-gap of $P$.
\end{proof}

\section{Proofs}\label{section3}

This easy lemma having a trivial proof is the basic lemma for all main results in this paper.
\begin{lemma} \normalfont \label{lemma1}
Let $C$ be a smooth curve, $P \in C$ and $a,b \in \mathbb{Z}_{\geq 1}$ with $b=na+r$ with $r,n \in \mathbb{Z}$ satisfying $0< r <a$ and $n \geq 1$. 
Assume $a,b \in \WS (P)$.
Assume $Q \in C$ with $Q \neq P$ and $aQ \in \vert aP \vert$.
Let $\mu \in \mathbb{Z}_{\geq 1}$ with $0 < \mu <a$ and assume $\vert bP-\mu Q \vert$ does not have $Q$ as a base point.
Then $an+(a-\mu) \in \WS (Q)$.
\end{lemma}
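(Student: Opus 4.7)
The plan is to construct an explicit rational function on $C$ whose pole divisor is exactly $(na+a-\mu)Q$; its existence immediately forces $an+(a-\mu)\in\WS(Q)$.

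I would extract two building blocks from the hypotheses. First, from $a\in\WS(P)$ together with $aQ\in|aP|$, the linear equivalence $aQ\sim aP$ produces a function $\varphi\in K(C)$ with zero divisor exactly $aQ$ and pole divisor exactly $aP$: this is simply the function defining the degree-$a$ morphism attached to $|aP|$, with $P$ and $Q$ lying totally ramified over $\infty$ and $0$ respectively, so $\varphi$ has no zeros or poles outside $\{P,Q\}$. Second, since $|bP-\mu Q|$ is non-empty and does not have $Q$ as a base point, I can pick $g\in L(bP-\mu Q)\setminus L(bP-(\mu+1)Q)$; concretely, $g$ has poles only at $P$ (of order at most $b=na+r$), vanishes at $Q$ with multiplicity exactly $\mu$, and is regular everywhere else.

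Next I would form the quotient $h:=g/\varphi^{n+1}$ and read off its valuations point by point. At $Q$ one gets order $\mu-(n+1)a=-(na+a-\mu)$; at $P$ the order is at least $(n+1)a-b=a-r$, which is \emph{strictly positive} thanks to $r<a$; and at every other point $h$ is regular because neither $g$ nor $\varphi$ has a pole there. Hence the pole divisor of $h$ is precisely $(na+a-\mu)Q$, which yields $an+(a-\mu)\in\WS(Q)$. I do not foresee any real obstacle here; the whole argument is a careful bookkeeping of valuations, the only subtle point being that the strict inequality $r<a$ is essential, since it is what guarantees that the zero of $\varphi^{-(n+1)}$ at $P$ strictly dominates the pole of $g$ at $P$.
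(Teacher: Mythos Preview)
Your proof is correct and follows essentially the same idea as the paper's: both use the linear equivalence $aP\sim aQ$ to transport $|bP-\mu Q|$ into a system supported at $Q$, with the hypothesis that $Q$ is not a base point pinning down the exact pole order. The only difference is packaging: the paper works with divisors, adding $(a-r)P$ to $|bP-\mu Q|$ and then replacing $aP$ by $aQ$ to recognise the resulting system as $|(na+a-\mu)Q|$, whereas you build the witnessing function $h=g/\varphi^{n+1}$ directly and read off valuations.
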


\begin{proof}
Since $aQ \in \vert aP \vert$ it follows that
\[
D_0:=(n-1)aQ+rP+(a-\mu)Q \in \vert bP-\mu Q \vert \text { .}
\]
Since $Q$ is not a base point of $\vert bP-\mu Q \vert$ it follows $Q$ is not a base point of $\vert bP-\mu Q+(a-r)P \vert$.
However $D_0+(a-r)P=(n-1)aQ+aP+(a-\mu) Q$ and again using $aQ \in \vert aP \vert$ we obtain
\[
D_1 := naQ+(a-\mu)Q \in \vert bP-\mu Q+(a-r)P \vert \text { .}
\]
This implies $Q$ is not a base point of $\vert (na+(a-\mu))Q \vert$ hence $\vert (na+(a-\mu))Q \vert$ is base point free.
This implies $na+(a-\mu)$ is a non-gap of $Q$.
\end{proof}

From now on in this paper we make the following assumptions.
$C$ is a smooth curve of genus $g$ and $P$ is a smooth point of $C$. We assume $\vert aP \vert$ is a base point free $g^1_a$ (i.e. $a$ is the first non-gap of $P$).
Let $n \in \mathbb{Z}_{\geq 1}$ such that $\dim \vert naP \vert=n$ while $\dim \vert (n+1)aP \vert >n+1$.
Such $n$ exists and it is unique.
This means the first non-gap $b$ of $P$ that is not a multiple of $a$ is of type $b=an+r$ with $0<r<a$.

\begin{lemma} \normalfont \label{lemma2}
Let $Q \in C$ with $Q \neq P$ and $aQ \in \vert aP \vert$.
There is a unique integer $\mu$ satisfying $0< \mu <a$ such that $\vert bP-\mu Q \vert$ does not have $Q$ as a base point.
\end{lemma}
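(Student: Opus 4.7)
The plan is to analyze the $Q$-vanishing sequence of the complete linear system $\vert bP\vert$ and pin down the one exceptional order that is not a multiple of $a$.

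First I would compute $h^{0}(bP)$. Because $a$ is the first non-gap of $P$ and $b$ is the first non-gap not divisible by $a$, the non-gaps of $P$ in $[0,b]$ are exactly $\{0,a,2a,\dots,na,b\}$, so $h^{0}(bP)=n+2$. Consequently the set $S$ of $Q$-vanishing orders of nonzero sections of $\mathcal O(bP)$ has cardinality $n+2$.

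Next, using the hypothesis $aQ\in\vert aP\vert$, choose a rational function $\phi$ on $C$ with $\operatorname{div}(\phi)=aQ-aP$. Then $1,\phi,\phi^{2},\dots,\phi^{n}\in H^{0}(naP)\subseteq H^{0}(bP)$ have distinct $Q$-orders $0,a,2a,\dots,na$, so $\{0,a,2a,\dots,na\}\subseteq S$. Since $\vert S\vert =n+2$, exactly one value $m\in S$ remains, with $m\notin\{0,a,\dots,na\}$; in particular $m\ge 1$.

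The heart of the argument is to show $m<a$, from which the lemma follows at once, since then $S\cap(0,a)=\{m\}$ and $Q$ fails to be a base point of $\vert bP-\mu Q\vert$ precisely when $\mu\in S$. Suppose, toward a contradiction, that $m\ge a$. Pick $\widetilde h\in H^{0}(bP)$ with $\operatorname{ord}_{Q}(\widetilde h)=m$. Since $H^{0}(naP)=\langle 1,\phi,\dots,\phi^{n}\rangle$ realizes only the $Q$-orders $0,a,\dots,na$, we have $\widetilde h\notin H^{0}(naP)$; and because $b$ is the only non-gap of $P$ in $(na,b]$, this forces $\operatorname{ord}_{P}(\widetilde h)=-b$ exactly. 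The quotient $\widetilde h/\phi$ is then regular on $C\setminus\{P\}$ (at $Q$ because $m-a\ge 0$, and elsewhere because $\phi$ has no other zeros or poles), with exact pole order $b-a$ at $P$. Hence $b-a\in\WS(P)$; but $b-a=(n-1)a+r$ is either equal to $r<a$ (when $n=1$, contradicting that $a$ is the first non-gap) or is a non-gap strictly smaller than $b$ and not a multiple of $a$ (when $n\ge 2$, contradicting the minimality of $b$). Either way we reach a contradiction, so $m<a$.

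The main obstacle is this last step: it is the one place where the definition of $b$ (as the first non-gap off the multiples of $a$) and the equivalence $aQ\sim aP$ must be coupled, via the division by $\phi$, which transfers information about the order of $\widetilde h$ at $Q$ back into a statement about $\WS(P)$.
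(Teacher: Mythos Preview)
Your proof is correct and takes essentially the same approach as the paper's. The paper argues directly by computing $\dim\vert bP-aQ\vert=\dim\vert(b-a)P\vert=n-1=\dim\vert bP\vert-2$, so besides $\mu=0$ exactly one $\mu\in\{1,\dots,a-1\}$ makes $Q$ drop out as a base point; your vanishing-sequence language and the division by $\phi$ repackage the same identification $bP-aQ\sim(b-a)P$ and the same appeal to the structure of $\WS(P)$ below $b$.
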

\begin{proof}
Since $aQ \in \vert aP \vert$ one has $(b-a)P \in \vert bP-aQ \vert$, hence $Q$ is not a fixed point of $\vert bP-aQ \vert$.
From the definition of $a$ and $b$ it follows $\dim \vert bP-aQ \vert=n-1=\dim \vert bP \vert -2$.
Assume $\vert bP-Q \vert$ contains $Q$ as a base point with multiplicity $\nu$ ($\nu$ can be equal to 0).
Then $\vert bP-(\nu+1)Q \vert$ does not contain $Q$ as a base point and $\dim \vert bP-(\nu +1)Q \vert=\dim \vert bP \vert-1=n=\dim \vert bP-aQ \vert +1$.
In particular $\nu +1<a$.
This implies the existence of an integer $\mu$ satisfying $0<\mu <a$ such that $\vert bP- \mu Q\vert$ does not contain $Q$ as a base point (taking $\mu = \nu +1$).

In case there exists an integer $\mu '\neq \mu$ with $0< \mu' <a$ such that $\vert bP-\mu' Q\vert$ does not contain $Q$ as a base point, then $\mu '>\mu$ and we find $\dim \vert bP-\mu 'Q\vert=n-1$ and $\dim \vert bP-aQ \vert <n-1$, a contradiction.
\end{proof}

Under the assumptions of Lemma \ref{lemma2} it follows from Lemma \ref {lemma1} that $an+(a-\mu)$ is a non-gap of $Q$.
In case $\dim \vert (n+1)aP \vert =n+2$ then there is a unique non-gap of $Q$ between $an$ and $a(n+1)$.
So we obtain the following conclusion.

\begin{corollary} \normalfont \label{corollary8}
Assume $\dim \vert (n+1)aP \vert =n+2$ and $Q \in C$ with $Q \neq P$ and $aQ \in \vert aP \vert$.
If $\WS (P)=\WS (Q)$ then $\mu = a-r$ is the unique integer $0<\mu <a$ such that $\vert bP - \mu Q \vert$ does not have $Q$ as a base point.
\end{corollary}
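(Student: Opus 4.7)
The plan is to combine the two preceding lemmas with a careful count of non-gaps in the interval $(an, a(n+1))$. First I would invoke Lemma \ref{lemma2} to extract the unique integer $\mu$ with $0 < \mu < a$ such that $Q$ is not a base point of $\vert bP - \mu Q \vert$; this is the $\mu$ whose value I want to pin down. Then I would apply Lemma \ref{lemma1} to this $\mu$ to conclude that $an + (a - \mu)$ is a non-gap of $Q$. The key observation is that because $0 < \mu < a$, the integer $an + (a-\mu)$ lies strictly between $an$ and $a(n+1)$.

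Next I would translate the hypothesis $\dim \vert (n+1)aP \vert = n+2$ into information about non-gaps. Since $\dim \vert naP \vert = n$ forces the non-gaps of $P$ in $[0, na]$ to be exactly the multiples $0, a, 2a, \ldots, na$, the jump $\dim \vert (n+1)aP \vert = n+2$ means that there is exactly one additional non-gap of $P$ in the open interval $(na, (n+1)a)$, and by the definition of $b$ this non-gap must be $b = an + r$. Using the hypothesis $\WS(P) = \WS(Q)$, the same holds for $Q$: the only non-gap of $Q$ strictly between $an$ and $a(n+1)$ is $b = an + r$.

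Comparing these two facts, the non-gap $an + (a - \mu)$ produced by Lemma \ref{lemma1} must coincide with $b = an + r$, yielding $a - \mu = r$, i.e.\ $\mu = a - r$. Since $r$ satisfies $0 < r < a$, the value $a - r$ lies in the admissible range $(0, a)$, so this is consistent with Lemma \ref{lemma2}, which also guarantees uniqueness of $\mu$.

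There is no real obstacle here; the argument is essentially a two-line consequence of Lemmas \ref{lemma1} and \ref{lemma2} once one recognizes that the hypothesis on $\dim \vert (n+1)aP \vert$ is precisely what forces $b$ to be the only non-gap of $P$ in the interval $(an, a(n+1))$. The only point that requires a moment's care is to verify that the range condition $0 < a - \mu < a$ places $an + (a - \mu)$ in that same open interval, so that the matching against $b$ is forced.
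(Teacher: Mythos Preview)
Your proposal is correct and follows essentially the same approach as the paper. The paper's argument is just the two sentences preceding the corollary: from Lemmas~\ref{lemma1} and~\ref{lemma2} one gets that $an+(a-\mu)$ is a non-gap of $Q$, and the hypothesis $\dim\vert (n+1)aP\vert=n+2$ forces this to equal the unique non-gap $b=an+r$ of $Q$ in $(an,a(n+1))$; your write-up simply spells out these steps in more detail.
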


In case the linear system $\vert bP \vert$ is simple then we can give a geometric meaning to the number $\mu$ occuring in Lemma \ref{lemma2} and Corollary \ref{corollary8} using some specific plane model $\Gamma \subset \mathbb{P}^2$ of $C$.
As in \cite{ref1} we construct a simple base point free linear system $g^2_b$ on $C$ as follows.
Choose $D \in \vert bP \vert$ general, in particular $P \notin D$.
Inside the projective space $\vert bP \vert$ take the linear span of the line $\vert aP \vert +(b-a)P$ and $D$ (denoted by $<\vert aP \vert +(b-a)P;D>$).
This linear systems $g^2_b$ defines a morphism from $C$ to $\mathbb{P}^2$ and the image $\Gamma \subset \mathbb{P}^2$ is a plane curve of degree $b$ birationally equivalent to $C$.
(We write $\phi : C \rightarrow \Gamma$ to denote the normalization.)
The image $\phi (P)$ is a cusp of $\Gamma$ of type $(b-a;b)$.
This singularity causes that the genus of the curve $C$ is at most $((b-1)(a-1)+1-(a,b))/2$ (see the computation in Section 1 of \cite{ref1}).
It should be mentioned that it is proved in \cite {ref1}, Section 3 that there exist such curves $C$ for all $g \leq ((a-1)(b-1)/2+1-(a,b))/2$ (see also \cite {ref15}, Section 3 in case $(a,b)=1$).
From now on we assume $\vert bP \vert$ is simple and $\Gamma \subset \mathbb{P}^2$ is such a plane model of $C$.

Assume $Q \in C$ with $Q \neq P$ and $aQ \in \vert aP \vert$.
Clearly $\phi (Q) \neq \phi (P)$ (since $bP \in g^2_b$).
Let $L_Q$ be the line in $\mathbb{P}^2$ connecting $\phi (P)$ and $\phi (Q)$.
Since the pencil of lines on $\mathbb{P}^2$ through $\phi (P)$ induces $\vert aP \vert$ on $C$ (because $(b-a)P+\vert aP \vert \subset g^2_b$) it follows $i(L_Q.\Gamma;\phi (Q))=a$.

Let $\mu$ be the multiplicity of $\Gamma$ at $\phi (Q)$. We already know $\mu \leq a$.
In case $\mu = a$ then it would imply $\vert bP-aQ \vert$ is base point free.
Since $\vert bP-aQ\vert = \vert (b-a)P \vert$ this would contradict the meaning of the integers $a$ and $b$.
It follows $1 \leq \mu \leq a-1$ and $\phi (Q)$ is a cusp of type $(\mu; a)$ of $\Gamma$.
The pencil of lines in $\mathbb{P}^2$ containing $\phi (Q)$ induces a base point free linear system on $C$ contained in $\vert bP-\mu Q \vert$.
Therefore the multiplicity of $\phi (Q)$ on $\Gamma$ is the integer $0 < \mu < a$ mentioned in Lemma \ref{lemma2} and Corollary \ref{corollary8}.
Using this plane model $\Gamma$ of $C$ we obtain the following conclusion.

\begin{corollary} \normalfont \label{corollary9}
Assume $\vert bP \vert$ is simple, $\dim \vert (n+1)aP \vert = n+2$ and $Q \in C$ with $Q \neq P$ and $aQ \in \vert aP \vert$.
If $\WS (P)=\WS (Q)$ then $\phi (Q)$ is a cusp of $\Gamma$ of type $(a-r;a)$.
\end{corollary}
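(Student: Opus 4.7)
The plan is to read off the desired conclusions from the geometric setup already built in the preceding paragraphs, supplemented by one further application of a previously-established result. The text leading up to the corollary already shows that $\phi(Q)$ is a point of multiplicity $\mu \in \{1,\dots,a-1\}$ on $\Gamma$, that this $\mu$ agrees with the integer distinguished in Lemma \ref{lemma2}, and that $i(L_Q.\Gamma;\phi(Q))=a$ for the line $L_Q$ through $\phi(P)$ and $\phi(Q)$. Under the present hypotheses ($\dim|(n+1)aP|=n+2$ and $\WS(P)=\WS(Q)$), Corollary \ref{corollary8} identifies $\mu$ with $a-r$, giving the multiplicity in the claimed cusp type.

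To fit Definition \ref{definitieX}, I still need to verify that $\phi(Q)$ is locally analytically irreducible, i.e.\ that $\phi^{-1}(\phi(Q)) = \{Q\}$. Here I would make the implicit divisor computation explicit: since $\phi^{*}(L_Q) \in |bP|$, since $\phi(P)$ is itself a cusp with unique preimage $P$ and multiplicity $b-a$, and since the unique member of the pencil $|aP|$ passing through $Q$ is $aQ$, the entire pullback is forced to be
\[
\phi^{*}(L_Q) \;=\; (b-a)P + aQ.
\]
In particular no point of $C$ outside $\{P,Q\}$ lies over $L_Q$, so $Q$ is the only preimage of $\phi(Q)$, as required.

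Combining the two ingredients, $\phi(Q)$ is a locally analytically irreducible singular point of multiplicity $a-r$, and the distinguished tangent line whose existence is asserted by Definition \ref{definitieX} is exhibited concretely by $L_Q$, whose intersection multiplicity $a$ strictly exceeds $a-r$. This identifies $\phi(Q)$ as a cusp of type $(a-r;a)$. I do not foresee a real obstacle here: most of the work has been carried out in the paragraphs that set up the plane model $\Gamma$, and the only genuinely new step is the analytic-irreducibility assertion, which the divisor computation above settles cleanly.
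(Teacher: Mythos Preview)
Your proposal is correct and follows essentially the same route as the paper: the paragraphs preceding the corollary already establish that $\phi(Q)$ is a cusp of type $(\mu;a)$ with $\mu$ equal to the integer of Lemma~\ref{lemma2}, and Corollary~\ref{corollary8} then forces $\mu=a-r$. The only difference is that you spell out the unibranch verification via $\phi^{*}(L_Q)=(b-a)P+aQ$, whereas the paper leaves this implicit in the assertion $i(L_Q.\Gamma;\phi(Q))=a$; this is a harmless elaboration rather than a different argument.
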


\begin{corollary} \normalfont \label{corollary1}
Assume $\vert bP \vert$ is simple and $Q \in C$ with $Q \neq P$ and $aQ \in \vert aP \vert$. 
In case $g=((a-1)(b-1)+1-(a;b))/2$ and $r\neq a-1$ then $\WS (P) \neq \WS (Q)$.
\end{corollary}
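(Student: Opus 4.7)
My approach would exploit the plane model $\Gamma\subset\mathbb{P}^2$ of degree $b$ constructed just before the statement, together with the arithmetic--geometric genus relation $g=p_a(\Gamma)-\sum_Q\delta_Q$. The plan is to show first that the genus hypothesis forces $\phi(P)$ to be the \emph{only} singular point of $\Gamma$, and then to derive a contradiction from Corollary \ref{corollary9} under the assumption $\WS(P)=\WS(Q)$.

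For the first step, the arithmetic genus of $\Gamma$ is $p_a(\Gamma)=(b-1)(b-2)/2$. In the coprime case $(a;b)=1$, Lemma \ref{lemma4} applied to the cusp of type $(b-a,b)$ at $\phi(P)$ yields $\delta_{\phi(P)}=(b-a-1)(b-1)/2$, and a short computation gives $p_a(\Gamma)-\delta_{\phi(P)}=(a-1)(b-1)/2=g$. Since $\delta_Q\geq 1$ at every singular point, the vanishing of $\sum_{Q\neq\phi(P)}\delta_Q$ forces $\Gamma$ to be smooth outside $\phi(P)$. (In the non-coprime case, the hypothesis $g=((a-1)(b-1)+1-(a;b))/2$ again pins $\delta_{\phi(P)}$ to its maximal possible value and yields the same conclusion.)

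For the second step, I would first observe that $\WS(P)\supseteq\langle a;b\rangle$, and in the coprime case $\langle a;b\rangle$ has exactly $(a-1)(b-1)/2$ gaps by Sylvester--Frobenius, matching the genus, so $\WS(P)=\langle a;b\rangle$. A direct inspection of the generators then shows that the only non-gaps in $(na,(n+1)a]$ are $b$ and $(n+1)a$, hence $\dim\vert(n+1)aP\vert=n+2$ and Corollary \ref{corollary9} applies. Under the hypothesis $\WS(P)=\WS(Q)$, that corollary forces $\phi(Q)$ to be a cusp of $\Gamma$ of type $(a-r,a)$, which has multiplicity $a-r\geq 2$ thanks to $r\neq a-1$; but this is then a singular point of $\Gamma$ different from $\phi(P)$, contradicting the first step. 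The only delicate link is the identification $\WS(P)=\langle a;b\rangle$ and the associated dimension count that activates Corollary \ref{corollary9}; once these are in place, the $\delta$-accounting of the first step collides with the forced second cusp and the contradiction is immediate.
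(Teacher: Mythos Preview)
Your proposal is correct and follows essentially the same route as the paper. Both arguments deduce from the maximal-genus hypothesis that $\phi(P)$ is the unique singular point of the plane model $\Gamma$, establish $\dim\vert (n+1)aP\vert=n+2$, and then exploit the multiplicity of $\Gamma$ at $\phi(Q)$; the only cosmetic difference is that you invoke Corollary~\ref{corollary9} to force a cusp of type $(a-r,a)$ at $\phi(Q)$ (hence singular when $r\neq a-1$), whereas the paper phrases the same step contrapositively by reading off the non-gap $na+(a-1)$ of $Q$ from the smoothness of $\phi(Q)$.
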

\begin{proof}
From the condition $g=((a-1)(b-1)+1-(a,b))/2$ it follows $\dim \vert (n+1)aP \vert = n+2$.
Let $\Gamma$ be a plane model of $C$ as described before.
Then all points on $\Gamma$ different from $\phi (P)$ are smooth.
This implies $\phi (Q)$ is a cusp of $\Gamma$ of type $(1;a)$ and $na+(a-1)$ is the non-gap of $Q$ between $an$ and $a(n+1)$.
Therefore the Weierstrass semigroups of $Q$ and $P$ can be equal only in case $r=a-1$.
\end{proof}

From now on we assume $(a,b)=1$ with $a<b$.
In this case $\vert bP \vert$ is simple.
Write $b=na+r$ with $n \geq 1$ and $0< r <a$.
The equation of the plane model $\Gamma$ can be reduced to some canonical form (see e.g. \cite {ref15} Lemma 6.2). 
In case $g=(a-1)(b-1)/2$ then $\phi (P)$ is the only singular point on $\Gamma$ and such curves are the so-called $C_{a,b}$ curves.
In this case one has $\WS (P)=<a,b>$.

In case $g=(a-1)(b-1)/2$ and $b=a+1$ then $C=\Gamma$ is a smooth plane curve of degree $a+1$ defined by the linear system $\vert bP \vert$ (hence $P$ is a total inflection point of this smooth plane curve $\Gamma$).
For each point $Q$ on $C$ the linear system $\vert bP-Q \vert$ is a base point free linear system $g^1_a$ on $C$.
In case $Q$ is also a total inflection point of $\Gamma$ (i.e. $bQ \in \vert bP \vert$) then also $\WS (Q)=<a,b>$.
In that way $C$ can have many Weierstrass points having Weierstrass semigroup equal to $<a,a+1>$.

Now we are going to prove that in case $g=(a-1)(b-1)/2$ and $b \neq a+1$ then the linear system $\vert aP \vert$ is the unique linear system $g^1_a$ on $C$ without base points.
This implies that any point $Q$ on $C$ having $\WS (Q)=<a,b>$ satisfies $aQ \in \vert aP \vert$.

\begin{proposition} \normalfont \label{proposition1}
Let $C$ be a curve of genus $g=(a-1)(b-1)/2$ with $(a,b)=1$ and assume $C$ has a Weierstrass point with Weierstrass semigroup $<a;b>$.
In case $b\neq a+1$ then $C$ has a unique linear system $g^1_a$.
\end{proposition}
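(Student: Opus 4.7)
The plan is to argue by contradiction using Lemma \ref{lemmaX}. Suppose $C$ carries a base-point-free $g^{1}_{a}$ distinct from $\vert aP\vert$; then Lemma \ref{lemmaX} yields a divisor $e$ of $a$ with $1\leq e<a$ (write $m:=a/e\geq 2$) such that $(m-1)a+ie\in\WS(P)=<a;b>$ for every $i\geq 1$. The goal is to contradict this by producing $j\in\{1,\dots,m-1\}$ for which $(m-1)a+je$ is in fact a gap of $<a;b>$.

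I would work with the standard Ap\'ery description of $<a;b>$: since $(a,b)=1$, for each residue $\rho\in\{0,1,\dots,a-1\}$ there is a unique $\beta_{\rho}\in\{0,1,\dots,a-1\}$ with $\beta_{\rho}b\equiv\rho\pmod a$, and an integer $N\equiv\rho\pmod a$ lies in $<a;b>$ if and only if $\beta_{\rho}b\leq N$. Because $e\mid a$ and $(e,b)=1$ (from $(a,b)=1$), a direct check shows that $\beta_{\rho}$ is a multiple of $e$ precisely when $\rho$ is, so $\rho\mapsto\beta_{\rho}$ restricts to a bijection on $\{0,e,2e,\dots,(m-1)e\}$. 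In particular there is a unique $j^{*}\in\{1,\dots,m-1\}$ with $\beta_{j^{*}e}=a-e$, characterized by $(a-e)b\equiv j^{*}e\pmod a$; reducing this congruence modulo $m$ and using $(r,m)=1$ (a consequence of $(a,r)=1$ and $m\mid a$) one finds $j^{*}=m-r_{0}$, where $r_{0}:=r\bmod m\in\{1,\dots,m-1\}$.

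For this $j^{*}$, membership $(m-1)a+j^{*}e\in<a;b>$ is precisely $(a-e)b\leq(m-1)a+j^{*}e$. Substituting $a=em$ and $b=na+r$ and dividing by $e$ reduces this inequality to
\[
(m-1)(b-m)\leq m-r_{0}.
\]
A short case split finishes the argument. If $ne\geq 2$ then $b-m=m(ne-1)+r\geq m+1$, hence $(m-1)(b-m)\geq m^{2}-1>m-1\geq m-r_{0}$. If instead $n=e=1$ then $m=a$ and $r_{0}=r$, so the inequality becomes $(a-1)r\leq a-r$, i.e.\ $a(r-1)\leq 0$, which forces $r=1$ and hence $b=a+1$. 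Both outcomes contradict the hypothesis $b\neq a+1$, so $(m-1)a+j^{*}e$ is a gap of $<a;b>$, contradicting Lemma \ref{lemmaX} and proving the uniqueness of the $g^{1}_{a}$.

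The principal obstacle is the divisibility bookkeeping used to extract $j^{*}$: one must verify that the Ap\'ery bijection $\rho\mapsto\beta_{\rho}$ really preserves multiples of $e$, and then solve $(a-e)b\equiv j^{*}e\pmod a$ modulo $m$ to exhibit $j^{*}$ explicitly in $\{1,\dots,m-1\}$. Once $j^{*}$ is in hand the numerical estimate is straightforward, and the hypothesis $b\neq a+1$ enters in an essentially unique way through the borderline case $n=e=1$, matching the sharpness observed earlier for smooth plane curves of degree $a+1$.
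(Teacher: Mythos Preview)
Your proof is correct and follows essentially the same route as the paper's. Both arguments invoke Lemma \ref{lemmaX}, observe that the resulting non-gaps $(a/e-1)a+ie$ must lie in $<a;b>$, use $(e,b)=1$ to set up a bijection on residues that forces some index with Ap\'ery coefficient $a-e$ (your $\beta_{j^{*}e}=a-e$, the paper's $y_i=\tfrac{a}{e}-1$), and then derive the key inequality $(a-e)b<\tfrac{a}{e}\,a$ (or your slightly sharper form $(m-1)(b-m)\leq m-r_0$), which is incompatible with $b\geq a+2$. The only cosmetic differences are that you phrase things via Ap\'ery sets and compute $j^{*}$ explicitly, and you split the endgame as $ne\geq 2$ versus $n=e=1$, whereas the paper splits as $e\geq 2$ versus $e=1$; both case analyses are straightforward and lead to the same conclusion.
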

\begin{proof}
Assume $C$ has more than one linear system $g^1_a$.
From Lemma \ref{lemmaX} it follows that there exists a divisor $e$ of $a$ different from $a$ such that for all integers $i\geq 1$ the integer $(\frac{a}{e}-1)a+ie$ is a non-gap of $P$.
By assumption those integers belong to $<a;b>$ hence each one of them can be written as $xa+yb$ for some non-negative integers $x$ and $y$.
Since $(a,b)=1$ and $e$ divides $a$ it follows $e$ divides $y$.
Therefore for each integer $1 \leq i \leq \frac{a}{e}-1$ there is a pair of integers $(x_i;y_i)$ with $x_i \geq 0$ and $0<y_i<\frac{a}{e}$ such that $(\frac{a}{e}-1)a+ie=x_ia+y_ieb$.
This implies there is an integer $k_i$ such that $y_ib=i+k_i\frac{a}{e}$.
In case $y_i=y_{i'}$ the this implies $(i-i')=(k_i-k_{i'})\frac{a}{e}$ and therefore $i=i'$.
This implies there exists some $1 \leq i \leq \frac{a}{e}-1$ such that $y_i=\frac{a}{e}-1$ and therefore 
\[
x_ia+(\frac{a}{e}-1)eb=(\frac{a}{e}-1)a+ie<\frac{a}{e}a \text { .}
\]
In case $e \geq 2$ one has $a-e = e(\frac{a}{e}-1)\geq 2(\frac{a}{e}-1)\geq \frac{a}{e}$ (since $e \neq a$ one has $\frac{a}{e} \geq 2$).
Since $b>a$ this implies $(a-e)b>\frac{a}{e}a$, a contradiction.
In case $e=1$ one obtains $(a-1)b<a^2$.
This is a contradiction in case $b \geq a+2$.
\end{proof}

From Proposition \ref{proposition1} and Corollary \ref{corollary1} we obtain Theorem A from the introduction.

\begin{theorem} \normalfont \label{theorem1}
Assume $g=(a-1)(b-1)/2$ and assume $b\neq a+1$ and $r<a-1$ then there is no point $Q \neq P$ such that $\WS (Q)=<a;b>$.
\end{theorem}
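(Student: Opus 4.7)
The plan is to combine Proposition \ref{proposition1} (uniqueness of the pencil $g^1_a$) with Corollary \ref{corollary1} (which rules out $\WS(P)=\WS(Q)$ whenever $aQ\in|aP|$). Concretely, I would argue by contradiction: suppose there exists $Q\neq P$ with $\WS(Q)=<a;b>$. Since the first non-gap of $Q$ equals $a$, the linear system $|aQ|$ is a base-point-free $g^1_a$ on $C$.

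The first step is to invoke Proposition \ref{proposition1}: because $g=(a-1)(b-1)/2$, $(a,b)=1$, and $b\neq a+1$, the curve $C$ carries a unique base-point-free $g^1_a$. Therefore $|aQ|=|aP|$, and in particular $aQ\in|aP|$. This reduces us to precisely the situation treated by Corollary \ref{corollary1}.

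The second step is to apply Corollary \ref{corollary1}. Under $(a,b)=1$ we have $((a-1)(b-1)+1-(a,b))/2=(a-1)(b-1)/2=g$, so the genus hypothesis of that corollary is fulfilled. The additional hypothesis $r\neq a-1$ is exactly the one we are assuming, and the simplicity of $|bP|$ is guaranteed by $(a,b)=1$ (as noted in the discussion preceding Proposition \ref{proposition1}). Corollary \ref{corollary1} then yields $\WS(P)\neq\WS(Q)$, contradicting the assumption $\WS(Q)=<a;b>=\WS(P)$.

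Since essentially all of the real work has already been done in Proposition \ref{proposition1} and Corollary \ref{corollary1}, there is no substantive obstacle to overcome here; the only thing to verify is that the hypotheses of both results are simultaneously available, namely $b\neq a+1$ (needed to apply Proposition \ref{proposition1}, which fails precisely in the smooth plane quartic-type situation where many total inflection points of $\Gamma$ can share the semigroup $<a;a+1>$) and $r\neq a-1$ (needed to apply Corollary \ref{corollary1}, which otherwise would allow $\phi(Q)$ to be a smooth point of $\Gamma$ producing the non-gap $na+(a-1)=na+r$ of $Q$). Both exclusions are explicitly built into the statement, so the two results dovetail and the theorem follows immediately.
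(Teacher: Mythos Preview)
Your proof is correct and follows essentially the same route as the paper: use Proposition~\ref{proposition1} to obtain uniqueness of the $g^1_a$, conclude $aQ\in|aP|$, and then apply Corollary~\ref{corollary1} to derive a contradiction. The only addition you make is a brief check that the hypotheses of Proposition~\ref{proposition1} and Corollary~\ref{corollary1} are indeed available, which the paper leaves implicit.
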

\begin{proof}
The condition of $<a;b>$ being the Weierstrass semigroup of $P$ is equivalent to $g=(a-1)(b-1)/2$ in case $a$ and $b$ are non-gaps of $P$ with $(a;b)=1$.
From Proposition \ref{proposition1} it follows $C$ has a unique $g^1_a$.
So in case $Q$ would have the same Weierstrass semigroup as $P$ then $aQ \in \vert aP \vert$.
However Corollary \ref{corollary1} implies that is this case $\WS (Q)\neq \WS (P)$.
\end{proof}

Assume $Q$ is a cusp of type $(\mu; a)$ on the plane model $\Gamma$ with $(a,\mu )=1$.
In this case $\phi ^{-1}(Q) \subset C$ consists of exactly one point also denoted by $Q$.
From Lemma \ref{lemma4} it follows the genus of $C$ is at most $(a-1)(b-\mu )/2$.
Note that in case $\mu = a-r$ then from $(a,b)=1$ it follows $(a,\mu )=1$.
From Corollary \ref{corollary9} we obtain Theorem B from the introduction.

\begin{corollary} \normalfont \label{corollary2}
Assume $\dim \vert (n+1)aP \vert=n+2$ and let $Q \in C$ with $Q \neq P$ and $aQ \in \vert aP \vert$.
In case $g>(a-1)(b-a+r)/2$ then $\WS (Q) \neq \WS (P)$.
\end{corollary}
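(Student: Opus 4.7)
The plan is to argue by contradiction using the plane model $\Gamma \subset \mathbb{P}^2$ introduced just before the statement, together with the cusp analysis already done in Corollary \ref{corollary9} and the $\delta$-invariant computation of Lemma \ref{lemma4}.

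Assume for contradiction that $\WS(Q) = \WS(P)$. The hypothesis $(a,b)=1$ with $a<b$ already ensures that $|bP|$ is simple, so the plane model $\Gamma$ and its normalization $\phi : C \to \Gamma$ from the preceding discussion are available. Since $\dim |(n+1)aP| = n+2$ is assumed, Corollary \ref{corollary9} applies and forces $\phi(Q)$ to be a cusp of $\Gamma$ of type $(a-r;\,a)$.

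Next I would verify $(a, a-r) = 1$: from $b = na + r$ with $(a,b)=1$ we get $(a,r) = 1$, and hence $(a, a-r) = 1$. This lets me invoke Lemma \ref{lemma4} for the cusp at $\phi(Q)$, yielding the $\delta$-contribution $(a-r-1)(a-1)/2$. Combined with the known cusp of type $(b-a; b)$ at $\phi(P)$ (whose two parameters are again coprime since $(b-a, b) = (a,b) = 1$, giving $\delta_{\phi(P)} = (b-a-1)(b-1)/2$ by Lemma \ref{lemma4}), the usual genus formula $g \le p_a(\Gamma) - \sum_x \delta_x$ with $p_a(\Gamma) = (b-1)(b-2)/2$ produces
\[
g \;\le\; \frac{(b-1)(b-2)}{2} - \frac{(b-a-1)(b-1)}{2} - \frac{(a-r-1)(a-1)}{2} \;=\; \frac{(a-1)(b-a+r)}{2},
\]
which contradicts the standing hypothesis $g > (a-1)(b-a+r)/2$. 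Equivalently, one can just cite the remark preceding the corollary: an additional cusp of type $(\mu;a)$ with $(a,\mu)=1$ forces $g \le (a-1)(b-\mu)/2$, and taking $\mu = a-r$ gives exactly the bound that is being violated.

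There is no real obstacle here: every ingredient (simplicity of $|bP|$, the identification $\mu = a-r$ via Corollary \ref{corollary9}, coprimality of $a$ and $a-r$, the $\delta$-formula of Lemma \ref{lemma4}, and the genus bound from the plane model) has been prepared earlier. The only point requiring a moment's care is the coprimality check $(a, a-r)=1$, which is what makes Lemma \ref{lemma4} available at the new cusp and thus underpins the genus inequality.
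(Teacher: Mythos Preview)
Your proof is correct and follows essentially the same route as the paper: the paper also derives the corollary from Corollary~\ref{corollary9} (forcing a cusp of type $(a-r;a)$ at $\phi(Q)$), the coprimality $(a,a-r)=1$, and the genus bound $g\le (a-1)(b-\mu)/2$ coming from Lemma~\ref{lemma4}. Your explicit arithmetic with $p_a(\Gamma)$ and the two $\delta$-invariants simply unpacks the paper's one-line remark preceding the corollary.
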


In case $C$ has an linear system $g^1_a$ different from $\vert aP \vert$ it follows from Lemma \ref{lemmaX} there exists a divisor $e$ of $a$ different from $a$ such that $\left( \frac{a}{e}-1 \right)\frac {a}{e}+e$ is a non-gap of $P$.
As a rough estimate this implies there is a non-gap of $P$ not being a multiple of $a$ having value at most $\left( \frac{a}{2} \right)^2$.
Because of the meaning of $b$ this is impossible in case $b>\left( \frac{a}{2} \right)^2$.
Therefore in this case Corollary \ref{corollary2} implies the following statement on uniqueness of Weierstrass semigroups.

\begin{corollary} \normalfont \label{corollary10}
Assume $\dim \vert (n+1)aP \vert = n+2$ , $b>\left( \frac{a}{2} \right)^2$ and $g>(a-1)(b-a+r)/2$.
Then $\WS (P)$ occurs at most once.
\end{corollary}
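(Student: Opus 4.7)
The strategy is to reduce Corollary~\ref{corollary10} to Corollary~\ref{corollary2} by proving that, under the hypothesis $b>(a/2)^2$, the pencil $\vert aP \vert$ is the \emph{unique} base-point-free $g^1_a$ on $C$. Once uniqueness is in hand, any $Q \in C$ with $Q \neq P$ and $\WS(Q)=\WS(P)$ has $a$ as its first non-gap, so $\vert aQ \vert$ is a base-point-free $g^1_a$, necessarily equal to $\vert aP \vert$; equivalently $aQ \in \vert aP \vert$. The remaining hypotheses $\dim \vert (n+1)aP \vert = n+2$ and $g > (a-1)(b-a+r)/2$ then let me invoke Corollary~\ref{corollary2} to conclude $\WS(Q) \neq \WS(P)$, a contradiction. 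Hence no such $Q$ exists and $\WS(P)$ occurs at most once.

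For the uniqueness step I argue by contradiction. Suppose $C$ carries a base-point-free $g^1_a$ distinct from $\vert aP \vert$. Lemma~\ref{lemmaX} then produces a divisor $e$ of $a$ with $e<a$ such that $(a/e-1)a+ie$ is a non-gap of $P$ for every $i \in \mathbb{Z}_{\geq 1}$. Specializing to $i=1$ yields the non-gap
\begin{equation*}
m := (a/e-1)a + e,
\end{equation*}
and $m \not\equiv 0 \pmod a$ because $0 < e < a$. By the very definition of $b$ as the smallest non-gap of $P$ that is not a multiple of $a$, this forces $b \leq m = (a/e-1)a + e$.

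The concluding ingredient is the rough estimate announced in the paragraph immediately preceding the statement: one bounds the right-hand side uniformly in the unknown proper divisor $e$ of $a$ by $(a/2)^2$. Combining this bound with $b \leq (a/e-1)a+e$ contradicts the hypothesis $b > (a/2)^2$ and delivers the required contradiction. I expect the main obstacle to be precisely this uniform estimate: since $(a/e-1)a+e$ is decreasing in $e$ on the interval of proper divisors, the large-$e$ cases are immediate but the extreme case $e=1$ (corresponding to $C'$ birational to $\Gamma$ in the proof of Lemma~\ref{lemmaX}) requires the most care. There one may need to exploit additional instances $i \geq 2$ of the family of non-gaps provided by Lemma~\ref{lemmaX}, rather than only the $i=1$ term, in order to push the ultimate bound on the smallest non-gap of $P$ not divisible by $a$ down to $(a/2)^2$.
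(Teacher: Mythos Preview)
Your overall architecture---prove uniqueness of the $g^1_a$ via Lemma~\ref{lemmaX}, then feed $aQ\in\lvert aP\rvert$ into Corollary~\ref{corollary2}---is exactly the paper's. The paper's entire ``proof'' is the short paragraph immediately preceding the statement, which asserts the same rough estimate you are trying to justify.

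Where you flag a difficulty you are right, but your proposed repair does not work. For $e=1$ the non-gaps supplied by Lemma~\ref{lemmaX} are $(a-1)a+i$ for $i\ge 1$; every one of them is at least $a^2-a+1$, and for $a\ge 2$ one has $a^2-a+1>(a/2)^2$. So no choice of $i\ge 2$ will ever produce a non-gap below $(a/2)^2$, and the function $e\mapsto (a/e-1)a+e$ being decreasing is irrelevant once $e$ is already $1$. The same obstruction persists for $e=2$ (the value $a^2/2-a+2$ exceeds $a^2/4$ for all $a\ge 3$) and for $e=3$ whenever $a\neq 6$. Only for $e\ge 4$ does the single $i=1$ term already fall below $(a/2)^2$.

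In other words, the inequality the paper states (``a non-gap of $P$ not a multiple of $a$ of value at most $(a/2)^2$'') is \emph{not} a consequence of Lemma~\ref{lemmaX} alone, and the paper does not supply the missing argument either---it simply calls it a ``rough estimate''. So you are not overlooking anything that the paper provides. To close the gap one has to bring in the genus hypothesis as well: for instance when $e=1$ Lemma~\ref{lemmaX} forces $g\le (a-1)^2$, while $b>(a/2)^2$ together with $g>(a-1)(b-a+r)/2$ gives $g>(a-1)(a-2)^2/8$, and these are incompatible once $a\ge 11$; similar (and easier) genus counts handle $e=2,3$. For the finitely many small values of $a$ a direct check is needed. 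None of this is in the paper, so your proposal matches it in spirit but shares its incompleteness at precisely the point you suspected.
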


The estimate used in Corollary \ref{corollary10} is very rough.
Using some (still rough) estimates on the number of non-gaps we obtain the following condition implying uniqueness of the $g^1_a$ in case $g<(a-1)(b-1)/2$.

\begin{proposition} \normalfont \label{proposition2}
Assume for each divisor $e$ of $a$ different from $a$ (but including 1) one has
\[
g>\frac{(a-1)(b-1)}{2}-\frac{(an-ne-\frac{a}{e}+2)(an-ne-\frac{a}{e}+1)}{2ne}
\]
then $C$ has a unique base point free $g^1_a$.
\end{proposition}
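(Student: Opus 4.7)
The plan is an argument by contradiction. Assume $C$ has a base point free $g^1_a$ different from $|aP|$. Applying Lemma \ref{lemmaX} yields a divisor $e$ of $a$ with $1 \leq e < a$ such that every integer $(m-1)a + ie$ ($i \geq 1$) lies in $\WS(P)$, where $m := a/e$. Combined with $\langle a,b \rangle \subset \WS(P)$ (a consequence of $a, b \in \WS(P)$), and writing $N$ for the number of gaps of $\langle a,b \rangle$ that are non-gaps of $P$, we have $g = |\mathbb{N}\setminus\WS(P)| \leq (a-1)(b-1)/2 - N$. It therefore suffices to show that for the $e$ produced by Lemma \ref{lemmaX}, $N$ is at least the quantity subtracted in the statement of the proposition.

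To count $N$ I would decompose the forced non-gaps by residue modulo $a$. Writing $i = qm + s$ with $q \geq 0$ and $s \in \{0,\dots,m-1\}$ shows $(m-1)a+ie = (m-1+q)a+se$, so in residue class $se \pmod a$ (with $s \neq 0$) the forced non-gaps fill the ``levels'' $m-1, m, m+1, \ldots$ (where level $k$ means $ka + se$). On the other hand, since $(a,b)=1$ and $b = na+r$, the smallest non-gap of $\langle a,b\rangle$ in residue class $\rho \pmod a$ is $y(\rho) b$, where $y(\rho) \in \{1,\dots,a-1\}$ is the unique integer with $y(\rho) r \equiv \rho \pmod a$. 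A short calculation using $e \mid a$ and $(r,a)=1$ shows $y(se) = es'$ where $s' \equiv s\, r^{-1} \pmod m$, and $s \mapsto s'$ is a bijection of $\{1,\dots,m-1\}$; the minimality of $y(se)$ gives $y(se) r \geq se$, so the level $k_s$ of the smallest non-gap of $\langle a,b\rangle$ in class $se$ satisfies $k_s \geq e s' n$. Comparing the two layered patterns, the contribution to $N$ from residue class $se$ is $\max(0, k_s - (m-1)) \geq \max(0, es'n - (m-1))$, and summing over $s$ (equivalently over $s'$) yields
\[
N \geq \sum_{s=1}^{m-1}\max\bigl(0,\,esn - (m-1)\bigr).
\]

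The main obstacle is matching this sum with the explicit formula in the proposition. Set $t = en$, $u = m-1$, $s_0 = \lceil u/t\rceil$, and write $t s_0 = u + w$ with $w \in \{0,\dots,t-1\}$. Since $s \mapsto st - u$ is linear, the midpoint rule evaluates the sum exactly:
\[
\sum_{s=s_0}^{m-1}(st - u) = \frac{\bigl((t-1)u + (t-w)\bigr)\bigl((t-1)u + w\bigr)}{2t}.
\]
Writing $P = (t-1)u = (m-1)(en-1) = an - ne - a/e + 1$, this equals $(P+t-w)(P+w)/(2t)$, while the target lower bound is $P(P+1)/(2t)$. A one-line expansion gives
\[
(P + t - w)(P + w) - P(P+1) = (t-1)^2\, u + w(t-w),
\]
which is manifestly non-negative for $t, u \geq 1$ and $0 \leq w \leq t$. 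Hence the sum is at least $P(P+1)/(2t) = (an-ne-a/e+1)(an-ne-a/e+2)/(2ne)$, contradicting the hypothesis on $g$. So $C$ must have a unique base point free $g^1_a$.
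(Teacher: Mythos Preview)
Your argument is correct, and it reaches the same lower bound $P(P+1)/(2ne)$ with $P=an-ne-a/e+1$ that the paper obtains, so the contradiction with the hypothesis goes through. The route, however, is genuinely different from the paper's. The paper slices horizontally: for each level $x\geq a/e-1$ it observes that all of $xa+e,\,xa+2e,\dots,\,xa+(a/e-1)e$ are non-gaps of $P$, then uses a divisibility argument (as in Proposition~\ref{proposition1}) to bound the number of these that can lie in $\langle a,b\rangle$ by $\lfloor x/(ne)\rfloor$, and finally sums the deficit $(a/e-1)-x/(ne)$ over $x$; that arithmetic sum is exactly $P(P+1)/(2ne)$. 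You instead slice vertically by residue class $se\pmod a$, identify the Ap\'ery element of $\langle a,b\rangle$ in that class as $es'b$ via the bijection $s\mapsto s'\equiv sr^{-1}\pmod{a/e}$, and compare levels. Your approach yields the sharper exact value $(P+t-w)(P+w)/(2t)$ before you discard the non-negative excess $(t-1)^2u+w(t-w)$; the paper's estimate $\lfloor x/(ne)\rfloor\leq x/(ne)$ loses exactly the same amount. So the two computations are dual (rows versus columns of the same array), and yours makes the slack in the inequality visible while the paper's keeps the summation shorter.
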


\begin{proof}
Let $x$ be some integer at least 1.
From divisibility arguments as used in the proof of Proposition \ref{proposition1}, in case $(a;b)=1$ the number of elements of the type $xa+ie$ for some integer $1 \leq i \leq \frac{a}{e}-1$ inside $<a;b>$ is at most $[\frac{x}{ne}]$ (here $b=na+r$ with $0<r<a$).

Assume $C$ has a base point free $g^1_a$ different from $\vert aP \vert$. In case $x \geq \frac{a}{e}-1$ it follows from Lemma \ref{lemmaX} that $xa+ie$ is a non-gap of $P$ for each integer $1 \leq i \leq \frac{a}{e}-1$.
Therefore there are at least $(\frac{a}{e}-1)-[\frac{x}{ne}]\geq (\frac{a}{e}-1)-\frac{x}{ne}$ non-gaps of $P$ between $xa$ and $(x+1)a$ outside $<a;b>$.
Summing up over different values of $x$ we obtain at least $\frac{(an-ne-\frac{a}{e}+2)(an-ne-\frac{a}{e}+1)}{2ne}$ non-gaps of $P$ outside $<a;b>$.
\end{proof}

In case $a$ is a prime number we only have to consider the case $e=1$ in the statement of Proposition \ref{proposition2}.
In particular we obtain the following statement concerning uniqueness of Weierstrass semigroups.

\begin{corollary} \normalfont \label{corollary7}
Let $a$ be a prime number and assume $b>3a$ is an integer not divisible by $a$.
Write $b=na+r$ with $1 \leq r \leq a-1$.
Let $H$ be a Weierstrass semigroup containing $<a;b>$  having no non-gap outside of $<a;b>$ smaller than $(n+1)a$ and having genus $g>\frac{(a-1)(b-a+r)}{2}$.
Then $H$ occurs at most once.
\end{corollary}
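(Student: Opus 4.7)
The plan is to combine Proposition \ref{proposition2} (giving uniqueness of the $g^1_a$) with Corollary \ref{corollary2} (ruling out a second Weierstrass point on that fixed pencil). The primality of $a$ is what makes the first step tractable, since Proposition \ref{proposition2} then collapses to a single inequality, namely the one for $e=1$.

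Let $C$ be a curve of genus $g$ carrying a Weierstrass point $P$ with $\WS (P)=H$. I would first record three consequences of the hypothesis on $H$: the first non-gap of $P$ is $a$; the first non-gap of $P$ not a multiple of $a$ is $b$; and $b$ is the only non-gap strictly between $na$ and $(n+1)a$. Each of these follows from the observation that the elements of $<a;b>$ below $(n+1)a$ are precisely $0,a,2a,\ldots,na$ together with $b$, combined with the assumption that $H\setminus <a;b>$ contains no element below $(n+1)a$. In particular $\dim \vert (n+1)aP \vert = n+2$, so $P$ fits the running setting of Section \ref{section3}.

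Next I would apply Proposition \ref{proposition2}. Since $a$ is prime the only proper divisor of $a$ is $e=1$, so there is just one inequality to verify, and using the hypothesis $g>(a-1)(b-a+r)/2$ it suffices to show
\[
\frac{(an-n-a+2)(an-n-a+1)}{2n} \;\geq\; \frac{(a-1)(a-1-r)}{2}.
\]
The identity $an-n-a+1 = (a-1)(n-1)$ converts this into $(a-1)(n^2-3n+1)+(n-1)+nr \geq 0$. The hypothesis $b>3a$ forces $n \geq 3$, so $n^2-3n+1 \geq 1$ and every term on the left is non-negative; the inequality holds, and $\vert aP \vert$ is the unique base point free $g^1_a$ on $C$. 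This numerical reduction is the step I expect to be the main technical obstacle, since it is the only place where the size constraint $b>3a$ and the genus bound are used together.

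For the final step, any Weierstrass point $Q \neq P$ with $\WS (Q)=H$ also has $a$ as its first non-gap, so $\vert aQ \vert$ is a base point free $g^1_a$; uniqueness forces $aQ \in \vert aP \vert$. Corollary \ref{corollary2}, whose hypotheses $\dim \vert (n+1)aP \vert = n+2$ and $g>(a-1)(b-a+r)/2$ are both satisfied, then yields $\WS (Q) \neq \WS (P)$, contradicting $\WS (Q)=H$. Therefore no such $Q$ exists and $H$ occurs at most once.
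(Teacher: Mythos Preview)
Your proof is correct and follows essentially the same route as the paper's: reduce via Corollary~\ref{corollary2} to showing uniqueness of the $g^1_a$, then invoke Proposition~\ref{proposition2} with the single divisor $e=1$. Your verification of the needed numerical inequality, using the factorization $an-n-a+1=(a-1)(n-1)$ and working with general $n\geq 3$, is slightly more direct than the paper's, which specializes to $n=3$ (implicitly using monotonicity in $n$) and then checks $\frac{(a-1)(a-2)}{2}<\frac{(2a-1)(a-1)}{3}$.
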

\begin{proof}
From Corollary \ref{corollary2} it follows that in case there exists a smooth curve of genus $g$ having two different Weierstrass points $P$ and $Q$ with Weierstrass semigroup equal to $H$ then $aP$ and $aQ$ are not linearly equivalent.
In particular $C$ has a base point free linear system $g^1_a$ different from $\vert aP \vert$.
From Proposition \ref{proposition2} we know this implies
\[
g\leq \frac{(a-1)(b-1)}{2}-\frac{(an-n-a+2)(an-n-a+1}{2n} \text { .}
\]
Since $n \geq 3$ this implies $g\leq \frac{(a-1)(b-1)}{2}-\frac{(2a-1)(2a-2)}{6}$.
Since $\frac{(a-1)(a-2)}{2} < \frac{(2a-1)(a-1)}{3}$  we obtain a contradiction.
\end{proof}

In Section 4 we illustrate that using Lemma \ref{lemmaX} gives rise to better uniqueness statements for the linear system $g^1_a$  in case of explicit examples than using Proposition \ref{proposition2}.
It should be noted that the results of \cite {ref15} imply that many of those Weierstrass semigroups really occur as Weierstrass semigroups of points on certain curves.

Lemma \ref{lemma1} can also be used to determine the Weierstrass semigroups of the points $Q \neq P$  satisfying $aQ \in \vert aP \vert$ in some situations.

\begin{theorem} \normalfont \label{theorem3}
Assume $g=(a-1)(b-1)/2$ and let $Q \in C$ with $Q \neq P$ and $aQ \in \vert aP \vert$.
For $t \in \mathbb{Z}{\geq 1}$ let $s$ be the number of non-gaps $e$ of $P$ satisfying $ta<e<(t+1)a$.
Then $(t+1)a-i$ with $1 \leq i \leq s$ are the non-gaps of $Q$ satisfying $ta<e<(t+1)a$.
\end{theorem}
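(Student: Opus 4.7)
The plan is to produce $s$ explicit non-gaps of $Q$ in the interval $(ta,(t+1)a)$ via Lemma \ref{lemma1}, and then close the argument by a Riemann--Roch counting comparison between $\WS (P)$ and $\WS (Q)$. First I would use the hypothesis $g=(a-1)(b-1)/2$, which together with $(a,b)=1$ forces $\WS (P)=<a;b>$; every non-gap of $P$ in $(ta,(t+1)a)$ thus has the unique form
\[
e_\beta = (t-\beta n)a+\beta b = ta+\beta r,
\]
with $\beta$ running over the positive integers satisfying $\beta n\le t$ and $\beta r<a$. The map $\beta\mapsto e_\beta$ is injective, so $s$ equals the number of such $\beta$, and each admissible $\beta$ satisfies $1\le\beta\le s\le\lfloor (a-1)/r\rfloor\le a-1$.

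Next, for each such $\beta$ I would apply Lemma \ref{lemma1} to the non-gap $e_\beta$ with $\mu=\beta$. The hypothesis to check is that $|e_\beta P-\beta Q|$ has no base point at $Q$, and I would verify it on the plane model $\Gamma\subset\mathbb{P}^2$ introduced in Section \ref{section3}. Because $g=(a-1)(b-1)/2$, the only singularity of $\Gamma$ is the cusp at $\phi (P)$, so $\phi (Q)$ is a smooth point. Taking $\beta$ generic lines through $\phi (Q)$ yields a reducible plane curve of degree $\beta$ whose pullback to $C$ is a divisor $\beta Q+E'$ in $|\beta bP|$ with $E'$ avoiding both $P$ and $Q$; adding $(t-\beta n)aP$ then produces an element of $|e_\beta P-\beta Q|$ that misses $Q$. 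Since $\lfloor e_\beta/a\rfloor=t$, Lemma \ref{lemma1} gives $(t+1)a-\beta\in \WS (Q)$ for every $\beta\in\{1,\ldots,s\}$.

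To conclude I would invoke a counting argument. Let $S$ be the union of $a\mathbb{N}$ with the set $\{(t'+1)a-i:t'\ge 1,\ 1\le i\le s(t')\}$, where $s(t')$ is the count analogous to $s$ attached to the interval $(t'a,(t'+1)a)$. The previous paragraph gives $S\subseteq \WS (Q)$, and separating multiples of $a$ from the rest one checks term-by-term that $|S\cap[0,(t+1)a]|=|\WS (P)\cap[0,(t+1)a]|$ for every $t$. For $(t+1)a\ge 2g-1$ Riemann--Roch forces $|\WS (P)\cap[0,(t+1)a]|=(t+1)a+1-g=|\WS (Q)\cap[0,(t+1)a]|$, which, combined with $S\subseteq \WS (Q)$, gives equality on that initial segment. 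Letting $t\to\infty$ yields $S=\WS (Q)$ in $\mathbb{N}$, and reading off the non-gaps of $Q$ in $(ta,(t+1)a)$ from $S$ produces exactly $(t+1)a-1,(t+1)a-2,\ldots,(t+1)a-s$.

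The principal obstacle I foresee is step two: one has to match the value $\mu=\beta$ to each specific non-gap $e_\beta$, rather than the generic $\mu=1$ that Corollary \ref{corollary1} extracts from the defining $b$ alone, and only on the plane model does it become transparent why $\beta$ generic lines through the smooth point $\phi (Q)$ cut $\Gamma$ with multiplicity exactly $\beta$ at $\phi (Q)$. Once this is in hand, everything else is bookkeeping with Lemma \ref{lemma1} and the Riemann--Roch count.
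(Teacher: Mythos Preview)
Your overall strategy matches the paper's --- use smoothness of $\phi(Q)$ on $\Gamma$ to see that $|bP-Q|$ has no base point at $Q$, feed this into Lemma~\ref{lemma1}, then count --- but Step~1 contains a real error that breaks Step~2.

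The claim that every non-gap of $P$ in $(ta,(t+1)a)$ has the form $e_\beta=ta+\beta r$ with $\beta r<a$ is false once $r\ge 2$. The non-gaps of $\langle a;b\rangle$ in $(ta,(t+1)a)$ are exactly the numbers $\beta b+xa$ with $1\le\beta$, $\beta b<(t+1)a$ and $x$ chosen so the sum lands in the interval; the correct value of $s$ is therefore the largest $\beta$ with $\beta b<(t+1)a$, and this can exceed $\lfloor(a-1)/r\rfloor$. For instance take $a=5$, $b=8$ (so $n=1$, $r=3$) and $t=3$: the non-gaps of $P$ in $(15,20)$ are $16=2b$ and $18=a+b$, so $s=2$, whereas your conditions $\beta n\le t$, $\beta r<a$ allow only $\beta=1$. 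Consequently your Step~2 produces only $(t+1)a-1=19$ and misses $(t+1)a-2=18$; the claimed identity $\lfloor e_\beta/a\rfloor=t$ also fails for $\beta=2$, since $e_2=ta+2r=21\notin(15,20)$.

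The repair is easy and keeps your geometric picture intact: for each $1\le\beta\le s$ take $\beta$ generic lines through $\phi(Q)$ to get $\beta Q+E'\in|\beta bP|$, then add $\bigl(t-\lfloor\beta b/a\rfloor\bigr)aP$ (not $(t-\beta n)aP$) to land in $|eP-\beta Q|$ for the unique non-gap $e\equiv\beta b\pmod a$ in $(ta,(t+1)a)$; now Lemma~\ref{lemma1} yields $(t+1)a-\beta\in\WS(Q)$ for every $\beta\le s$. The paper does this a touch more economically: it fixes the single non-gap $e=sb+\epsilon a$ in the interval, picks a generic $D\in|bP-Q|$ (which avoids $Q$), and observes that $iD+((s-i)b+\epsilon a)P\in|eP-iQ|$ for all $1\le i\le s$, so one application of Lemma~\ref{lemma1} with varying $\mu=i$ suffices. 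Your final counting step is fine, and in fact sharper than what the paper writes out; note also that the equality $|\WS(P)\cap[0,(t+1)a]|=|\WS(Q)\cap[0,(t+1)a]|$ already holds for every $t$ directly from $h^0((t+1)aP)=h^0((t+1)aQ)$, so Riemann--Roch for large $t$ is not needed.
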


\begin{proof}
In case $s=a-1$ the theorem is trivially true, so we assume $s<a-1$.
The genus of $C$ implies $\WS (P)=<a;b>$.
Therefore the integer $s$ associated to $t$  is defined by the inequalities $sb<(t+1)a$ and $(s+1)b>(t+1)a$.
Define $\epsilon \in \mathbb{Z}_{\geq 0}$ such that $at < sb+\epsilon a<a(t+1)$.
Hence $sb+\epsilon a$ is a non-gap $e$ of $P$ satisfying $at < e < a(t+1)$.
Because of the genus of $C$ it follows that $Q$ corresponds to a smooth point on the plane model $\Gamma$ of $C$, hence $\vert bP-Q \vert$ does not have $Q$ as a base point.

For each integer $0 \leq i \leq s$ one has
\[
i(bP-Q)+\left( (s-i)b+\epsilon a   \right)P\in \vert (sb+\epsilon a)P-iQ \vert \text { .}
\]
This implies $\vert (sb+\epsilon a)P -iQ \vert$ does not contain $Q$ as a fixed point.
From Lemma \ref{lemma1} it follows $at+(a-i)$ is a non-gap of $Q$ for $1 \leq i \leq s$.
\end{proof}

In \cite{ref8}, Lemma 2.7 the authors also obtain the statement of Theorem \ref{theorem3} assuming $P$ is a Galois Weierstrass point.
The statement is formulated in a diiferent way and it needs some computations to show both descriptions of the Weierstrass semigroup are the same.

\begin{corollary} \normalfont \label{corollary3}
Assume $g=(a-1)(b-1)/2$ and $r=a-1$. Then for all $Q \in C$ with $aQ \in \vert aP \vert$ one has $\WS (Q)=<a;b>$.
\end{corollary}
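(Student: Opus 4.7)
The plan is to derive the corollary directly from Theorem~\ref{theorem3}, combined with a short combinatorial description of the non-gaps of $<a;b>$. Since $(a,b)=1$ with $a,b\in\WS(P)$ and $g=(a-1)(b-1)/2$, one already has $\WS(P)=<a;b>$, so it suffices to show $\WS(Q)=\WS(P)$.

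The key arithmetic observation exploits the hypothesis $r=a-1$, which rewrites $b$ as $(n+1)a-1$. Every element of $<a;b>$ admits a unique expression $xa+yb$ with $x\ge 0$ and $0\le y\le a-1$, and this substitution gives
\[
xa+yb=(x+y(n+1))a-y.
\]
For $y\ge 1$ this value lies strictly between $ta$ and $(t+1)a$ iff $x+y(n+1)=t+1$, which forces $1\le y\le \min\bigl(a-1,\lfloor(t+1)/(n+1)\rfloor\bigr)$. Writing $s_t$ for the number of non-gaps of $P$ in $(ta,(t+1)a)$, these non-gaps therefore form the descending block $(t+1)a-1,(t+1)a-2,\ldots,(t+1)a-s_t$.

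Theorem~\ref{theorem3} states that the non-gaps of $Q$ in $(ta,(t+1)a)$ are exactly $(t+1)a-i$ for $1\le i\le s_t$, i.e.\ the very same integers. Combined with the fact that every multiple of $a$ is a non-gap of both $P$ (as $a\in\WS(P)$) and $Q$ (as $aQ\in|aP|$ gives $kaQ\in|kaP|$ for all $k\ge 0$), this yields $\WS(Q)=\WS(P)=<a;b>$. The only non-routine step is the combinatorial identification of the non-gaps of $<a;b>$ in each interval $(ta,(t+1)a)$ as a descending block; this is precisely where $r=a-1$ is used decisively, since it is exactly the hypothesis that makes the shape of these non-gaps match the form Theorem~\ref{theorem3} imposes on $\WS(Q)$. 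No further geometric input is anticipated.
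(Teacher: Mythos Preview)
Your proof is correct and follows the same route the paper intends: Corollary~\ref{corollary3} is stated immediately after Theorem~\ref{theorem3} with no separate proof, so it is meant to be a direct consequence of that theorem, and you have supplied precisely the missing combinatorial verification---namely that when $r=a-1$ (so $b=(n+1)a-1$) the non-gaps of $<a;b>$ in each interval $(ta,(t+1)a)$ already form the descending block $(t+1)a-1,\ldots,(t+1)a-s_t$, matching exactly what Theorem~\ref{theorem3} prescribes for $\WS(Q)$.
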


Now we consider the case $g \leq (a-1)(b-a+r)/2$ to obtain a refinement of Corollary \ref{corollary2} and a generalisation of Theorem \ref{theorem3}.
In many cases it also implies sharpness of Corollary \ref{corollary2}.
Assume $P$ as before and $Q \neq P$ such that $aQ \in \vert aP \vert$ and $Q$ is a cusp of type $(\mu =a-r ;a)$ on the plane model $\Gamma$.
We assume $(a,b)=1$, hence $(a,\mu )=1$.
For $1 \leq m \leq \mu -1$ define the integer $n(m)$ such that $(n(m)-1)\mu < ma < n(m)\mu$.
Note that $m \leq \mu -1$ implies $n(m)<a$.

\begin{lemma} \noindent \label{lemma8}
For $n(m) \leq i \leq a-1$ one has $ib-ma$ is a non-gap of $Q$.
\end{lemma}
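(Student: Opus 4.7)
The plan is to mimic the proof of Lemma \ref{lemma1} using $i$ copies of a suitable divisor in $\vert bP - \mu Q \vert$, combining the cusp structure at $\phi(Q)$ with the identity $aP \sim aQ$. The hypothesis $i \geq n(m)$ will be used exactly once, to ensure that a certain coefficient is non-negative.

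First I would fix an effective divisor $D \in \vert bP - \mu Q \vert$ with $Q \notin \mathrm{supp}(D)$. Such a $D$ exists because, by Lemma \ref{lemma2} (equivalently, because $\phi(Q)$ is a cusp of multiplicity $\mu$ on $\Gamma$), the linear system $\vert bP - \mu Q \vert$ is base point free at $Q$. Since $i \geq n(m)$ gives $i\mu > ma$, the integer $i\mu - ma$ is positive, and
\[
D' := iD + (i\mu - ma)P
\]
is an effective divisor of degree $ib - ma$ with $Q \notin \mathrm{supp}(D')$.

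Next I would compute the linear equivalence class of $D'$. Using $D \sim bP - \mu Q$ together with the identity $b + \mu = (n+1)a$, one obtains
\[
D' \sim (ib + i\mu - ma) P - i\mu Q = ((n+1)i - m) a P - i\mu Q.
\]
Since the coefficient of $P$ is a multiple of $a$, the swap $aP \sim aQ$ turns it entirely into $Q$'s, giving $D' \sim \bigl( ((n+1)i - m)a - i\mu \bigr) Q$. A short simplification using $\mu = a - r$ and $b = na + r$ shows this coefficient equals $ib - ma$. Hence $D'$ is effective in $\vert (ib - ma) Q \vert$ and avoids $Q$, so $Q$ is not a base point of $\vert (ib - ma) Q \vert$ and $ib - ma \in \WS(Q)$.

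The main difficulty is purely arithmetic bookkeeping: tracking the coefficients during the swap $aP \leftrightarrow aQ$ and verifying that $((n+1)i - m) a - i\mu = ib - ma$. Structurally the argument is just Lemma \ref{lemma1} applied to $iD$ in place of a single auxiliary divisor, with $i \geq n(m)$ playing precisely the role of the positivity condition $i\mu - ma \geq 1$ that makes $D'$ well defined.
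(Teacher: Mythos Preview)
Your proof is correct and uses the same underlying mechanism as the paper's: take multiples of a divisor in $\vert bP-\mu Q\vert$ that misses $Q$, add a suitable multiple of $P$, and swap $aP\leftrightarrow aQ$ to land in $\vert (ib-ma)Q\vert$. The only structural difference is that the paper carries this out just for $i=n(m)$ and then invokes $b\in\WS(Q)$ (a consequence of Lemma~\ref{lemma1} with $\mu=a-r$) together with the semigroup property to obtain all larger $i$, whereas you run the construction directly for every $i\geq n(m)$. Your version is slightly more self-contained since it does not appeal to $b\in\WS(Q)$; the paper's version makes explicit that only the threshold case $i=n(m)$ needs the geometric input.
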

\begin{proof}
Since $\vert bP-\mu Q \vert$ does not have $Q$ as a base point it follows $\vert n(m)bP-n(m)\mu Q\vert = \vert (n(m)b-ma)P-(n(m)\mu-ma)Q \vert$ does not have $Q$ as a base point.
One has $n(m)b \equiv -n(m)\mu \mod a$.
This implies $\left( n(m)b-2ma+n(m)\mu   \right)P$ is linearly equivalent to $\left(  n(m)b-2ma+n(m)\mu \right)Q$.
Since $\vert \left( n(m)b-ma  \right)P-\left( n(m)\mu -ma  \right)Q+\left( n(m)\mu -ma  \right)P \vert$ does not have $Q$ as a base point and contains $\left( n(m)b-ma \right)Q$  it follows $\vert (n(m)b-ma)Q  \vert$ has no base point, hence $n(m)b-ma \in \WS (Q)$.

Since $b \in \WS (Q)$ it follows that for all integers $n(m) \leq i \leq a-1$ the integer $ib-ma \in \WS (Q)$.
\end{proof}

For all integers $1 \leq i <a$ one has $ib$ is the smallest integer $x$ in $<a;b>$ satisfying $x \equiv ib \mod a$.
Therefore the integers $ib-ma$ with $n(m) \leq i \leq a-1$ do not belong to $<a;b>$.
Varying $1 \leq m \leq \mu -1$ we obtain $\sum_{m=1}^{\mu -1} (a-n(m))$ non-gaps of $Q$ not belonging to $<a;b>$.
We call them the \emph{trivial new non-gaps} associated to a cusp of type $(\mu ; a)$ on the plane model $\Gamma$.

\begin{corollary} \normalfont \label{corollaryY}
Assume $\dim \vert naP \vert=n+2$ and let $Q \in C$ with $Q \neq P$ and $aQ \in \vert aP \vert$.
In case the Weierstrass semigroup of $P$ does not contain the list of trivial new non-gaps asssociated to a cusp of type $(a-r,a)$ on the plane model $\Gamma$.
Then $\WS (P) \neq \WS (Q)$.
\end{corollary}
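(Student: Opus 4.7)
The plan is to prove Corollary \ref{corollaryY} by contraposition, assembling a short chain from results already established earlier in the section. Suppose $\WS(P)=\WS(Q)$; I will show that $\WS(P)$ must contain the full list of trivial new non-gaps associated to a cusp of type $(a-r,a)$, contradicting the hypothesis.

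First, since the assumption $\dim|(n+1)aP|=n+2$ puts us in the setting of Corollary \ref{corollary8}, the unique integer $\mu$ with $0<\mu<a$ for which $|bP-\mu Q|$ does not have $Q$ as a base point is forced to be $\mu=a-r$. As the standing hypotheses include $(a,b)=1$ (so that $|bP|$ is simple and $\Gamma\subset\mathbb{P}^2$ is our plane model), Corollary \ref{corollary9} then identifies $\phi(Q)$ as a cusp of $\Gamma$ of type $(a-r,a)$. Note that $(a,b)=1$ gives $(a,r)=1$, hence $(a,a-r)=1$, so the cusp is of the kind Lemma \ref{lemma8} is designed to handle.

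Next, I apply Lemma \ref{lemma8} with this value $\mu=a-r$. For each integer $m$ in the range $1\le m\le a-r-1$ and each $i$ with $n(m)\le i\le a-1$, Lemma \ref{lemma8} asserts $ib-ma\in\WS(Q)$. The paragraph immediately after Lemma \ref{lemma8} verifies that every such integer lies outside $\langle a;b\rangle$, and that these are by definition the trivial new non-gaps associated to a cusp of type $(a-r,a)$ on $\Gamma$. Since we assumed $\WS(P)=\WS(Q)$, every one of these trivial new non-gaps must lie in $\WS(P)$.

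This directly contradicts the hypothesis that $\WS(P)$ fails to contain the list of trivial new non-gaps, so $\WS(P)\neq \WS(Q)$, proving the contrapositive. There is no real obstacle; the work is already packaged in Corollary \ref{corollary8}, Corollary \ref{corollary9}, and Lemma \ref{lemma8}. The only small verification worth mentioning explicitly is the coprimality $(a,a-r)=1$ implicit in invoking Lemma \ref{lemma8}, which is immediate from $(a,b)=1$.
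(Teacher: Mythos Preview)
Your proof is correct and follows exactly the route the paper intends: the corollary is stated without proof there because it is meant to be an immediate consequence of Corollary~\ref{corollary9} (forcing $\phi(Q)$ to be a cusp of type $(a-r,a)$ once $\WS(P)=\WS(Q)$) together with Lemma~\ref{lemma8} (producing all trivial new non-gaps in $\WS(Q)=\WS(P)$). Your contrapositive argument and the coprimality check $(a,a-r)=1$ are precisely the missing details the paper leaves to the reader.
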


Combining Corollary \ref{corollaryY} with Proposition\ref{proposition2} we obtain the following statement.

\begin{corollary} \normalfont \label{corollary4}
Assume $\dim \vert naP \vert = n+2$ and assume the Weierstrass semigroup of $P$ does not contain the list of trivial non-gaps asssociated to a cusp of type $(a-r,a)$ on the plane model $\Gamma$.
Assume for each divisor $e$ of $a$ different from $a$ one has
\[
g>\frac{(a-1)(b-1)}{2}-\frac{(an-ne-\frac{a}{e}+2)(an-ne-\frac{a}{e}+1)}{2}
\]
then for each point $Q \in C$ with $Q  \neq P$ one has $\WS (Q) \neq \WS (P)$.
\end{corollary}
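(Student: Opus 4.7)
The plan is to deduce Corollary \ref{corollary4} directly by chaining Proposition \ref{proposition2} with Corollary \ref{corollaryY}. The genus hypothesis, quantified over every proper divisor $e$ of $a$, is exactly what Proposition \ref{proposition2} needs to conclude that $C$ admits a unique base-point-free $g^1_a$; once that is in hand, any candidate point $Q$ is forced onto a fibre of $\vert aP\vert$, and the second hypothesis (on trivial new non-gaps) feeds directly into Corollary \ref{corollaryY} to rule out equality of the Weierstrass semigroups.

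Concretely, I would argue by contradiction. Suppose $Q \neq P$ satisfies $\WS(Q)=\WS(P)$. Since the first non-gap is an invariant of the Weierstrass semigroup, $a$ is also the first non-gap of $Q$, so $\vert aQ\vert$ is a base-point-free pencil of degree $a$ on $C$. The assumed genus inequality places us in the setting of Proposition \ref{proposition2}, giving uniqueness of the pencil $g^1_a$; therefore $\vert aQ\vert=\vert aP\vert$, and in particular $aQ\in \vert aP\vert$.

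At this stage the hypotheses of Corollary \ref{corollaryY} are met: the dimension assumption $\dim \vert naP\vert=n+2$ is inherited verbatim from the statement, and by assumption $\WS(P)$ does not contain the list of trivial new non-gaps associated to a cusp of type $(a-r,a)$ on the plane model $\Gamma$. Corollary \ref{corollaryY} then yields $\WS(P)\neq \WS(Q)$, contradicting the choice of $Q$, and the proof is complete.

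There is essentially no genuine obstacle: the argument is a formal composition of two previously established results. The only bookkeeping points are to observe that the first non-gap is a semigroup invariant (so the uniqueness conclusion of Proposition \ref{proposition2} can be applied symmetrically to the pencil $\vert aQ\vert$) and that the dimension and cusp-type hypotheses transfer intact from the present corollary to Corollary \ref{corollaryY}. All the real content has already been extracted in those two earlier statements.
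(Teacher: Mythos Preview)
Your proposal is correct and follows exactly the route the paper intends: the sentence preceding Corollary~\ref{corollary4} in the paper simply states that the result is obtained by combining Corollary~\ref{corollaryY} with Proposition~\ref{proposition2}, and your argument spells out precisely this combination (uniqueness of the $g^1_a$ forces $aQ\in\vert aP\vert$, after which Corollary~\ref{corollaryY} applies). Note only that the displayed genus bound in the statement has a typographical slip---the denominator should be $2ne$ as in Proposition~\ref{proposition2}---but this does not affect your reasoning.
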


In case $g=(a-1)(b-a+r)/2$ Lemma \ref{lemma8}  implies that the Weierstrass semigroup is completely determined in case $Q$ is an cusp of $\Gamma$ of type $(a-r;a)$.
This follows from the calculations made in the following lemma.
\begin{lemma} \normalfont \label{lemma3}
The number of trivial new non-gaps associated to a cusp of type $(\mu,a)$ on the plane model $\Gamma$ is equal to $(a-1)(\mu -1)/2$
\end{lemma}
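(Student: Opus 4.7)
The plan is to rewrite the trivial-new-non-gap count as a lattice-point count in a rectangle and then use a symmetry (involution) to evaluate it. Recall that for each $1 \leq m \leq \mu-1$ the integer $n(m)$ is defined by $(n(m)-1)\mu < ma < n(m)\mu$, and the trivial new non-gaps are the $ib-ma$ for $n(m) \leq i \leq a-1$. So the total number to be computed is
\[
N = \sum_{m=1}^{\mu-1}(a - n(m)).
\]

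First I would note that since $(a,\mu)=1$ and $1 \leq m \leq \mu-1$, the quotient $ma/\mu$ is never an integer; hence the defining inequalities for $n(m)$ say exactly that $n(m)=\lceil ma/\mu\rceil$. In particular, for a positive integer $i$ the condition $i \geq n(m)$ is equivalent to the \emph{strict} inequality $i\mu > ma$. Consequently $a-n(m)$ is the number of integers $i \in \{1,\dots,a-1\}$ with $i\mu > ma$, and therefore
\[
N = \#\bigl\{(m,i) \in \mathbb{Z}^{2} : 1 \leq m \leq \mu-1,\ 1 \leq i \leq a-1,\ i\mu > ma\bigr\}.
\]

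Now I would apply the involution $\sigma(m,i) = (\mu-m,\ a-i)$ on the rectangle $R := \{1,\dots,\mu-1\}\times\{1,\dots,a-1\}$. A direct check gives $(a-i)\mu - (\mu-m)a = -(i\mu-ma)$, so $\sigma$ swaps the subsets $R_{+}=\{i\mu>ma\}$ and $R_{-}=\{i\mu<ma\}$. Because $(a,\mu)=1$, the diagonal locus $\{i\mu=ma\}$ contains no points of $R$, so $R = R_{+}\sqcup R_{-}$ and $|R_{+}|=|R_{-}|$. Therefore
\[
N = |R_{+}| = \tfrac{1}{2}|R| = \tfrac{(a-1)(\mu-1)}{2},
\]
which is the desired equality. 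The only potential obstacle is making sure no lattice point on the line $i\mu=ma$ lies in $R$ and no $n(m)$ value coincides with a boundary; both are immediate consequences of $(a,\mu)=1$, so the argument is essentially a one-line symmetry once the count has been reinterpreted. (Equivalently, one can invoke the classical identity $\sum_{m=1}^{\mu-1}\lfloor ma/\mu\rfloor = (a-1)(\mu-1)/2$, but the involution above is self-contained.)
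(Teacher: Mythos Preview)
Your argument is correct. Reinterpreting $a-n(m)$ as the number of $i\in\{1,\dots,a-1\}$ with $i\mu>ma$ is legitimate because $(a,\mu)=1$ forces $ma/\mu\notin\mathbb{Z}$, so $i\ge n(m)=\lceil ma/\mu\rceil$ is equivalent to $i\mu>ma$; the involution $(m,i)\mapsto(\mu-m,a-i)$ then does exactly what you say, and the absence of lattice points on $i\mu=ma$ inside the rectangle again follows from $(a,\mu)=1$.

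The paper's own proof rests on the \emph{same} underlying symmetry, namely $n(\mu-m)=a+1-n(m)$, but packages it differently: it introduces an indicator $x(k)\in\{0,1\}$ recording whether some $m$ has $n(m)=k$, rewrites the count as $\sum_{k=2}^{a-1}x(k)(a-k)$, and then pairs $k$ with $a+1-k$. This forces a case split according to the parity of $a$ (to deal with a possible fixed point $k=(a+1)/2$). Your two-dimensional lattice formulation absorbs that fixed-point issue automatically---the involution on $R$ has no fixed points since $(a,\mu)=1$---and so avoids the case analysis entirely. In short: same symmetry, but your setup is the classical Gauss--Eisenstein lattice-point argument and is shorter and parity-free, while the paper's version stays one-dimensional at the cost of a case distinction.
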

\begin{proof}
By definition for each $1 \leq m \leq \mu -1$ one has $n(m)\leq a-1$ and $n(m) \geq 2$.
Also for $2 \leq k \leq a-1$ there is at most one integer $1 \leq m \leq \mu-1$ with $n(m)=k$.
In case such $m$ with $n(m)=k$ exists we define $x(k)=1$, otherwise $x(k)=0$.
Each integer $1 \leq k \leq a-1$ gives rise to $x(k)(a-k)$ trivial new non-gaps.
So the number of trivial new non-gaps can be written as $\sum_{k=2}^{a-1}x(k)(a-k)$.

For $1 \leq m \leq \mu -1$ write $ma=(n(m)-1)\mu +\epsilon$ with $0 < \epsilon < \mu$.
Then $(\mu - m)a=(a-n(m))\mu +(\mu - \epsilon)$ implying $n(\mu -m)=a-n(m)+1$.

In case $a$ is even and $2 \leq k \leq \frac{a-2}{2}$ it implies $x(k)=x(a-k+1)$ while $\frac{a}{2} \leq a-k+1 \leq a-1$.
So the number of trivial new non-gaps is equal to
\[
\sum_{k=2}^{(a-2)/ 2}x(k)(a-k+a-(a-k+1))=(a-1)\sum_{k=2}^{(a-2)/ 2}x(k) \text { .}
\]
On the other hand $2\sum_{k=2}^{(a-2)/2}x(k)=\mu -1$ and we obtain that the number of trivial new non-gaps is equal to $(a-1)(\mu -1)/2$ (note that $\mu$ is odd in case $a$ is even since $(a,\mu)=1$).

In case $a$ is odd and $2 \leq k \leq \frac{a+1}{2}$ one has $\frac{a+1}{2} \leq a-k+1 \leq a-1$.
In case $x\left( \frac{a+1}{2}  \right)=0$ we conclude as before.
In case $x\left(\frac  {a+1}{2} \right)=1$ the number of trivial new non-gaps is equal to
\[
\sum_{k=1}^{(a-1)/2}x(k)(a-1)+\left( \frac{a-1}{2}  \right) \text { .}
\]
On the other hand $2\sum_{k=1}^{(a-1)/2}x(k)+1=\mu -1$ and again we obtain again the number of trivial new non-gaps is equal to $(a-1)(\mu -1)/2$.
\end{proof}

\begin{corollary} \normalfont \label{corollary5}
Assume $\dim \vert naP \vert = n+2$ and assume $g=(a-1)(b-a+r)/2$.
Let $Q$ is a point on $C$ with $aQ \in \vert aP \vert$ and assume $Q$ corresponds to a cusp of type $(a-r ; a)$ on the plane model $\Gamma$.
Then $\WS (Q)$ is equal to the union $S$ of $<a;b>$ and the set of trivial new non-gaps.
\end{corollary}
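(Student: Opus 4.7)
The plan is a counting argument that exploits the fact that the genus $g=(a-1)(b-a+r)/2$ is numerically exactly right for the candidate semigroup $S$ to be the full Weierstrass semigroup.

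First, I would establish the inclusion $S\subseteq \WS(Q)$. For $\langle a;b\rangle\subseteq \WS(Q)$, it suffices to show $a,b\in \WS(Q)$; the former is immediate from $aQ\in\vert aP\vert$, and for the latter I would apply Lemma \ref{lemma1} with $\mu=a-r$: because $Q$ is a cusp of type $(a-r;a)$ on $\Gamma$, the pencil of lines in $\mathbb{P}^2$ through $\phi(Q)$ cuts out a base point free linear subsystem of $\vert bP-(a-r)Q\vert$, so Lemma \ref{lemma1} yields $an+(a-(a-r))=an+r=b\in\WS(Q)$. The trivial new non-gaps lie in $\WS(Q)$ directly by Lemma \ref{lemma8}.

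Second, I would count $\vert\mathbb{N}\setminus S\vert$ and compare with the genus. The standard formula for the coprime numerical semigroup $\langle a;b\rangle$ gives $\vert\mathbb{N}\setminus\langle a;b\rangle\vert=(a-1)(b-1)/2$. As noted in the paragraph preceding Corollary \ref{corollaryY}, each trivial new non-gap $ib-ma$ (with $1\le m\le \mu-1$ and $n(m)\le i\le a-1$) is strictly less than $ib$, which is the smallest element of $\langle a;b\rangle$ in its residue class modulo $a$; hence the trivial new non-gaps are disjoint from $\langle a;b\rangle$. Combining this with Lemma \ref{lemma3} (which gives $(a-1)(\mu-1)/2$ trivial new non-gaps with $\mu=a-r$), I obtain
\[
\vert\mathbb{N}\setminus S\vert=\frac{(a-1)(b-1)}{2}-\frac{(a-1)(a-r-1)}{2}=\frac{(a-1)(b-a+r)}{2}=g.
\]

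Finally, I close the argument: since $\WS(Q)$ is a Weierstrass semigroup of genus $g$, its complement has exactly $g$ elements; the inclusion $S\subseteq \WS(Q)$ gives $\mathbb{N}\setminus\WS(Q)\subseteq \mathbb{N}\setminus S$; and both sides have cardinality $g$, so they coincide and hence $\WS(Q)=S$. The only slightly delicate point is the initial verification that $b\in \WS(Q)$, which requires knowing that $\vert bP-(a-r)Q\vert$ has no base point at $Q$; this is where the cusp hypothesis is essential, but it reduces directly to the plane-geometric description already used before Corollary \ref{corollary9}, so no real obstacle remains.
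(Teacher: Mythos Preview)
Your proof is correct and follows essentially the same counting argument as the paper: establish $S\subseteq\WS(Q)$ via Lemma~\ref{lemma8}, then use Lemma~\ref{lemma3} to see that $\vert\mathbb{N}\setminus S\vert=g$, forcing equality. The only cosmetic difference is that the paper phrases the count as an equality between the number in Lemma~\ref{lemma3} and the $\delta$-invariant from Lemma~\ref{lemma4}, whereas you plug the hypothesis $g=(a-1)(b-a+r)/2$ in directly; and you spell out explicitly why $b\in\WS(Q)$ (via Lemma~\ref{lemma1} with $\mu=a-r$), a step the paper leaves implicit inside its appeal to Lemma~\ref{lemma8}.
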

\begin{proof}
From Lemma \ref{lemma8} it follows that $\WS (Q)$ contains $S$.
From the equality of the numbers obtained in Lemma \ref{lemma3} and Lemma \ref{lemma4} one finds that $\mathbb{N} \setminus S$ consists of exactly $g$ elements.
Therefore $S$ is the Weierstrass semigroup of $Q$.
\end{proof}

We are now able to prove sharpness of Corollary \ref{corollary2} in a lot of cases.

\begin{corollary} \normalfont \label{corollary6}
Same assumptions as in Corollary \ref{corollary5}. 
Assume the curve $C$ has a Weierstrass point $Q \neq P$ with non-gaps $a$ and $b$ with $ aQ \in \vert aP \vert$. 
Then $\WS (P)=\WS (Q)$ and they are both equal to the union of $<a;b>$ and the set of trivial new non-gaps.
\end{corollary}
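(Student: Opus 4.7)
The plan is to first pin down the cusp type of $\phi(Q)$ on the plane model $\Gamma$ of $C$ based at $P$, apply Corollary \ref{corollary5} to obtain $\WS(Q)=S$, and then deduce $\WS(P)=S$ by running the same argument with the roles of $P$ and $Q$ exchanged.

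From $aQ\in|aP|$ with $Q\neq P$, the discussion preceding Corollary \ref{corollary9} shows that $\phi(Q)$ is a cusp of $\Gamma$ of type $(\mu';a)$ for some $1\leq\mu'\leq a-1$, and Lemma \ref{lemma1} produces $na+(a-\mu')\in\WS(Q)$. Since $aP\sim aQ$ we have $|kaP|=|kaQ|$ for every $k$, so the running hypothesis $\dim|(n+1)aP|=n+2$ transfers to $\dim|(n+1)aQ|=n+2$; this says $\WS(Q)$ meets $(na,(n+1)a]$ in exactly two elements, namely $(n+1)a$ and one interior value. The hypothesis $b\in\WS(Q)$ places $b=na+r$ in that interior, and comparing with $na+(a-\mu')\in\WS(Q)\cap(na,(n+1)a)$ forces $\mu'=a-r$. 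Corollary \ref{corollary5} then gives $\WS(Q)=S$.

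For $\WS(P)=S$ I plan to argue symmetrically. From $\WS(Q)=S$ the smallest positive non-gap of $Q$ is $a$: every trivial new non-gap $ib-ma$ has $i\geq n(m)\geq 2$, so under $n\geq 1$ it strictly exceeds $a$, while $a$ itself is already the smallest positive element of $\langle a;b\rangle$. Since $(a;b)=1$, the linear system $|bQ|$ is simple, and one can build a plane model $\Gamma'$ of $C$ based at $Q$ exactly as $\Gamma$ was built at $P$, with $\psi(Q)$ a cusp of type $(b-a;b)$. Applying the previous paragraph verbatim with $P$ and $Q$ swapped, using $aP\in|aQ|$ and $b\in\WS(P)$, yields that $\psi(P)$ is a cusp of $\Gamma'$ of type $(a-r;a)$, and a second invocation of Corollary \ref{corollary5} delivers $\WS(P)=S$.

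The step that demands some care is the bookkeeping accompanying the swap: one must be sure that Corollary \ref{corollary5} genuinely applies with $Q$ in the role of $P$. This requires $|bQ|$ simple (same reference as for $|bP|$, since $(a;b)=1$), the equality $\dim|(n+1)aQ|=n+2$ (immediate from $|aP|=|aQ|$), and that $a$ really is the first non-gap of $Q$ (read off from $\WS(Q)=S$). None of these is deep, but the symmetric re-application is the one place where a careless reading could drop a hypothesis.
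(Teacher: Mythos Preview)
Your proof is correct and follows exactly the paper's approach: apply Corollary \ref{corollary5} to the plane model based at $P$ to get $\WS(Q)=S$, then swap roles and apply it to the plane model based at $Q$ to get $\WS(P)=S$. You supply the details the paper's one-line proof omits---pinning down the cusp type via Lemma \ref{lemma1} and the dimension condition, and checking that the swapped hypotheses (first non-gap $a$, simplicity of $\vert bQ\vert$, $\dim\vert (n+1)aQ\vert=n+2$) hold---and the only compressed step, that every trivial new non-gap $ib-ma$ exceeds $a$, is easily justified since $ma<n(m)\mu<n(m)a$ forces $n(m)\geq m+1$, whence $ib-ma\geq (m+1)b-ma=b+m(b-a)>b>a$.
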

\begin{proof}
One can make a plane model once using $P$ and once using $Q$ applying Corollary \ref{corollary5} in both cases.
\end{proof}

\begin{lemma} \normalfont \label{lemma5}
There exists a plane curve of degree $b$ having a cusp of type $(b-a;a)$ and a cusp of type $(\mu; a)$ and no other singularities.
\end{lemma}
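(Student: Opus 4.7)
My plan is to exhibit an explicit plane curve of degree $b$ with the two prescribed cusps and verify it has no other singularities. I place the cusp of type $(b-a;b)$ at $[0{:}1{:}0]$ with tangent $Z=0$ and the cusp of type $(\mu;a)$ at $[0{:}0{:}1]$ with tangent $X=0$; the natural candidate is
\[
F(X,Y,Z)=Y^aZ^{b-a}+cX^\mu Z^{b-\mu}-X^b
\]
for any nonzero constant $c$. This is homogeneous of degree $b$, and the coprimalities $\gcd(b-a,b)=\gcd(a,b)=1$ and $\gcd(a,\mu)=\gcd(a,r)=1$ (the latter from $(a,b)=1$ and $b=na+r$) hold automatically.

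To confirm the cusp at $[0{:}1{:}0]$, I would pass to the chart $Y=1$, where $F$ becomes $z^{b-a}+cx^\mu z^{b-\mu}-x^b$. The Newton diagram at the origin has supporting monomials at $(0,b-a)$, $(\mu,b-\mu)$, and $(b,0)$; a direct computation shows that $(\mu,b-\mu)$ lies strictly above the segment joining the two outer vertices, so the lower-left boundary of the Newton polygon is the single edge from $(0,b-a)$ to $(b,0)$. The edge polynomial $z^{b-a}-x^b$ is quasi-homogeneous with an isolated singularity of cusp type $(b-a;b)$, and the remaining monomial has strictly higher quasi-homogeneous weight, so the singularity is still a cusp of type $(b-a;b)$; the identity $F(x,1,0)=-x^b$ supplies intersection multiplicity $b$ with the tangent $Z=0$. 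An analogous Newton-polygon argument in the chart $Z=1$, applied to $\tilde f(X,Y)=Y^a+cX^\mu-X^b$, uses the single edge from $(0,a)$ to $(\mu,0)$ to conclude that $[0{:}0{:}1]$ is a cusp of type $(\mu;a)$ with tangent $X=0$, and $\tilde f(0,Y)=Y^a$ yields tangent intersection multiplicity $a$.

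To rule out other singular points I would compute in the chart $Z=1$ that $\partial_Y\tilde f=aY^{a-1}$ and $\partial_X\tilde f=X^{\mu-1}(\mu c-bX^{b-\mu})$. Any singular point must have $Y=0$; then $\tilde f(X,0)=X^\mu(c-X^{b-\mu})$ forces $X=0$ or $X^{b-\mu}=c$, while $\partial_X\tilde f(X,0)=0$ forces $X=0$ or $X^{b-\mu}=\mu c/b$. Because $\mu\ne b$ the two nontrivial conditions cannot both hold, so the only affine singular point is $(0,0)=[0{:}0{:}1]$. At infinity, $F(X,Y,0)=-X^b$ forces $\Gamma\cap\{Z=0\}=\{[0{:}1{:}0]\}$, which has already been analyzed. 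Hence $\Gamma$ carries exactly the two prescribed cusps and no others.

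The main obstacle is the rigorous identification of the cusp at $[0{:}1{:}0]$ when $n\ge 2$: the multiplicity $b-a$ is then large and the singularity type cannot be read off from the tangent cone alone. The cleanest route is the semi-quasi-homogeneous principle used above; alternatively, in the spirit of Lemma \ref{lemma4}, one may iterate the sequence of blow-ups prescribed by the Euclidean algorithm on the pair $(b-a,b)$ and check at each stage that the proper transform of $F$ matches that of the core quasi-homogeneous curve $z^{b-a}-x^b$, the extra monomial contributing only higher-order corrections inside the successive exceptional divisors.
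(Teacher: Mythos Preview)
Your explicit construction is correct and gives a genuinely different proof from the paper's. The paper does not write down an equation at all; instead it first produces a \emph{rational} plane curve $\Gamma_0$ of degree $b$ via a $g^2_b$ on $\mathbb{P}^1$ spanned by $(b-a)P_0+\langle aP_0,aQ_0\rangle$ and $\mu Q_0+E$, argues (citing \cite{ref1}, Section~3) that apart from the two prescribed cusps $\Gamma_0$ has only ordinary nodes, and then invokes Tannenbaum's smoothing theorem to deform the nodes away while keeping the cusps. Your approach avoids both the reference to \cite{ref1} and the deformation-theoretic input: the curve $Y^aZ^{b-a}+cX^{\mu}Z^{b-\mu}-X^b=0$ already has exactly the two cusps, and your Newton-polygon/semi-quasi-homogeneous argument together with the elementary Jacobian computation is self-contained. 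What the paper's degeneration method buys is flexibility---the same template is reused at the end of Section~4 with two cusps of type $(r;a+r)$---whereas your equation is tailored to this particular pair of singularities. Two minor remarks: (i) you have tacitly corrected the paper's typo, since the cusp at $\phi(P)$ is of type $(b-a;b)$, not $(b-a;a)$; (ii) for the application one also needs $\Gamma$ irreducible, which follows easily from $\gcd(a,\mu)=1$ (the factor $X^{\mu}$ in $X^b-cX^{\mu}$ prevents it from being a $p$-th power for any prime $p\mid a$), and you might note this explicitly.
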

\begin{proof}
On $\mathbb{P}^1$ choose two different points $P_0$ and $Q_0$.
Take $g^1_a=<aP_0;aQ_0>$ and choose a general effective divisor $E$ of degree $b- \mu$ on $\mathbb{P}^1$.
Take $g^2_b=<(b-a)P_0+<aP_0;aQ_0>;\mu Q+E >$.
This gives rise to a plane curve $\Gamma_0$ of degree $b$ such that $P_0$ defines a cusp of type $(b-a;a)$ and $Q_0$ defines a cusp of type $(\mu ; a)$.
As in \cite{ref1} Section 3 one can prove that all other singularities of $\Gamma_0$ are ordinary nodes.
Using Tannenbaum's result as in loc. cit. those nodes can be smoothed in a family of plane curves obtaining a plane curve $\Gamma$ of degree $b$ having a cusp of type $(b-a;a)$ and a cusp of type $(\mu ; a)$ and no other singularities.
\end{proof}

The normalisation $C$ of the plane curve obtained in Lemma \ref{lemma5} is a smooth curve of genus $g=(a-1)(b- \mu)/2$.
The point $P$ corresponding to the cusp of type $(b-a;a)$ has non-gaps $a$ and $b$.
The point $Q$ corresponding to the cusp of type $(\mu ; a)$ also has non-gaps $a$ and $b$.
In case $\dim \vert naP \vert =n+2$ then from Corollary \ref{corollary6} it follows both points have Weierstrass gap sequence equal to the union of $<a;b>$ and the set of trivial new non-gaps.
In case $\dim \vert naP \vert >n+2$ then there is a non-gap of $P$ between $an$ and $a(n+1)$ different from $b$.
This implies the existence of non-gaps not contained in $<a,b>$.
In case the number of those new non-gaps is larger than $(a-1)(\mu -1)/2$ this gives a contradiction.
In such cases Corollary \ref{corollary2} is sharp.
In case $b$ is sufficiently large with respect to $a$ and some integer $b'$ with $b <b' <(n+1)a$ is also a non-gap then the number of non-gaps not contained in $<a;b>$ is indeed larger than $(a-1)(\mu -1)/2$ and we obtain sharpness in Corollary \ref{corollary2}.

\section{Examples} \label{section4}

\begin{example} \normalfont \label{example1}
Assume $C$ is a smooth curve of genus $g$ and $P$ is a Weierstrass point on $C$ with first non-gap equal to 4.
Note that all Weierstrass semigroups with first non-gap equal to 4 occur as Weierstrass semigroup of some point on some smooth curve (see \cite{ref17}).
Let $4n+1$ with $n \geq 2$ be a non-gap of $P$ (in case $a=4$ this is the only possibility for $b$ implying the existence of Weierstrass semigroups that occur at most once in this paper).

In Lemma \ref{lemmaX} we have to consider the possibilities $e=1$ and $e=2$.
In case $e=1$ then all integers at least 12 are non-gaps of $P$.
In particular $g\leq 8$ in case $n=2$ and $g \leq 9$ in case $n\geq 3$.
In case $e=2$ then for all integers $m \geq 2$, $3m$ a non-gap of $P$.
This implies $g \leq 2n+2$.
In case $4n-3$ would be a non-gap of $P$ then $g \leq 6n-6$ and in case $4n+2$ or $4n+3$ also would be some non-gap of $P$ then one concludes $g \leq 4n$.
In particular in case $g>6n-6$ then $\dim \vert (n+1)4P \vert = n+2$ and the same conclusion holds in case $g>4n$ and $4n-3$ is a gap of $P$.
In case $g>4n$ then also $e=1$ cannot occur in Lemma \ref{lemmaX}, therefore $\vert 4P \vert$ is the unique $g^1_4$ in that case.

In case $g>4n$, one of the three integers $8n-2$; $12n-5$ and $12n-1$ is a gap of $P$ and $4n-3$ is a gap of $P$, then for all $Q \in C$ with $Q \neq P$ one has $\WS (P) \neq \WS (Q)$.
In case $g=6n-m$ with $0\leq m \leq 2n-1$ it implies $<4;4n+1>\cup \{ 4i+3 : 3n-m \leq i \leq 3n-1\}$ is a Weierstrass semigroup that occurs at most once.
The only other type of Weierstrass semigroup occuring at most once as a corollary of the results in this paper has genus $g=6n-2$ and is equal to $<4;4n+1>\cup \{ 8n-2;12n-1\}$.

In case $g>6n-2$ then all Weierstrass semigroups of genus $g$ containing $<4;4n+1>$ occur at most once.
In case $g=6n-3$ then we can use Lemma \ref{lemma5} to conclude there is a smooth curve $C$ of genus $g$ having two different points $P$ and $Q$ with $\WS (P)=\WS (Q)$ both equal to $<'4;4n+1>\cup \{ 8n-2;12n-1; 12n-5 \}$.

In case $g \equiv 1 \mod 3$ there is exactly one Weierstrass semigroup with first non-gap equal to 3 that occurs at most once.
For other values of $g$ such Weierstrass semigroup does not exist.
In case $g$ is a large integer then $g$ can be written as $6n-m$ with $0\leq m \leq 2n-1$ in many ways.
This implies for all integers $N$ there is a bound $g(N)$ such that for $g \geq g(N)$ there are at least $N$ Weierstrass semigroups of genus $g$ with first non-gap equal to 4 that occur at most once.

\end{example}

\begin{example} \normalfont \label{example2}
Assume $C$ is a smooth curve of genus $g$ and $P$ is a Weierstrass point on $C$ with first non-gap equal to 5.
Note that all Weierstrass semigroups with first non-gap equal to 5 occur as Weierstrass semigroup of some point on some smooth curve (see \cite{ref18}).
Let $b=5n+r$  with $1 \leq r \leq 3$ and $n \geq 2$ in case $r=1$ be another non-gap of $P$ not divisible by 5.
In Lemma \ref{lemmaX} we only have to consider the possibility $e=1$.
In that case all integers at least 20 need to be non-gaps of $P$.
This implies $g \leq 16$.
In case $n=3$ it implies $g \leq 15$ and in case $n=2$ it implies $g \leq 14$.
In case $b=8$ it implies $g \leq 12$ and in case $b=7$ it implies $g \leq 11$.
In all other cases $\vert 5P \vert$ is the only $g^1_5$ on $C$.
In case there are two non-gaps $x$ of $P$ satisfying $5n<x<5(n+1)$ then $g\leq 6n+3$.
In case $5(n-1)+r$ is also a non-gap of $P$ then $g \leq 10n-12+2r$.
This implies $\dim \vert (n+1)5P \vert =n+2$ in case $g>10n-12+2r$ and also in case $g>6n+3$ provided $5(n-1)+r$ is a gap of $P$.

In case $r=1$ and $\dim \vert (n+1)5P \vert = n+2$ a Weierstrass semigroups of genus $g$ containing $<5;5n+1>$ and not containing the set $\{ 10n-3; 15n-7; 15n-2; 20n-11; 20n-6; 20n-1 \}$ occurs at most once unless $g \leq 14$ in case $n=2$.
Consider the case $g=10n-6$.
Then $5n-4$ is a gap of $P$.
In case $n \geq 3$ we also have $6n+3<10n-6$, hence $\dim \vert 5(n+1)P \vert = n+2$.
We can apply Lemma \ref{lemma5} to obtain that all Weierstrass semigroups of genus $10n-6$  containing $<5;5n+1>$ occur at most once except for $<5;5n+1>\cup \{ 10n-3; 15n-7; 15n-2; 20n-11; 20n-6; 20n-1 \}$.
There do exist smooth curves of genus $10n-6$ having two different Weierstrass points having that particular Weierstrass semigroup.

More concretely, for $n \geq 4$ and all non-negative integers $m$; $m'$ satisfying $m \leq n +m'$; $2m \geq m'$; $m+m' >3n+3$ the Weierstrass semigroups $<5;5n+1>\cup \{ 5i+3 : m' \leq i \leq 3n\} \cup \{ 5i+4 : m \leq i \leq 4n \}$ occur at most once.

In case $r=2$ and $n=1$ then as soon as $g<12$ it is possible that $C$ has some $g^1_5$ different from $\vert 5P \vert$.
This is clear, if the plane model $\Gamma$ used in Section 3 has one more singular point $S \neq P$ then the pencil of lines through $S$ induces a base point free $g^1_k$ for some $k \leq 5$ on $C$.
However using different methods it is proved in \cite{ref4} that a curve $C$ of genus $g=11$ has at most one Weierstrass point with Weierstrass semigroup containing $<5;7>$ (see also Lemma \ref{lemmaSection4}).

In case $n \geq 2$ and $\dim \vert (n+1)5P \vert =n+2$ a Weierstrass semigroups of genus g containing $<5;5n+2>$ and not containing the set $\{ 10n-1; 15n+1; 20n-2; 20n+3 \}$ occurs at most once unless $g \leq 14$ in case $n=2$.
Consider the case $g=10n+1$.
Thens $5n-3$ is a gap of $P$, otherwise $<5;5n-3>\subset \WS (P)$ and therefore $g\leq 10n-8$.
In case $n \geq 2$ we also have $6n+3<10n+1$, hence $\dim \vert 5(n+1)P \vert =n+2$.
Again we can obtain Lemma \ref{lemma5} to obtain sharpness of the uniqueness results.
We leave it to the reader to obtain a more concrete description of the Weierstrass semigroups occuring at most once obtained in the paper.

In case $r=3$ and $\dim \vert (n+1)5P \vert = n+2$ we have uniqueness of Weierstrass semigroups of genus $g$ containing $<5;5n+3>$ unless the semigroup contains $\{15n+4; 20n+7\}$ unless $g \leq 12$ in case $n=1$.
Consider the case $g=10n+4$.
Then $5n-2$ is a gap of $P$ and $6n+3 \leq 10n+4$, hence $\dim \vert 5(n+1)P \vert =n+2$ for $n \geq 1$.
Again we can apply Lemma \ref{lemma5} obtaining sharpness of the uniqueness results.
A more concrete description of the Weierstrass semigroups occuring at most once is very similar to the description obtained in Example \ref{example1}

\end{example}

\begin{example} \normalfont \label{example3}
For the case $a=6$ we only need to consider a non-gap $6n+1$ with $n \geq 2$.
In case there is a non-gap between 6n+1 and 6n+6 then $g \leq 9n+1$.
The bound in Corollary \ref{corollary2} is $g>15n-10$.
We consider the case $g=15n-10$.

Since $9n+1<15n-10$ there are no non-gaps between $6n+1$ and $6n+6$ in case $g=15n-10$.
Also in case $6n-5$ would be a non-gap then $g \leq 15n-15$.
This implies $\dim \vert (n+1)6P \vert = n+2$.
Assume $C$ has another $g^1_6$ different from $\vert 6P \vert$.
In Lemma \ref{lemmaX} we need to consider the cases $e=1$; 2 and 3.
In case $e=1$ then all integers at least 30 are non-gaps.
This implies the existence of more than 10 non-gaps outside $<6;6n+1>$ in case $n \geq 3$.
In case $e=2$ then all even integers at least equal to 12 are non-gaps.
This implies the existence of $6n-4$ non-gaps outside $<6;6n+1>$.
In case $n \geq 3$ we obtain more than 10 non-gaps outside $<6;6n+1>$.
In case $e=3$ then all integers divisible by 3 and at least equal to 6 are non-gaps.
In case $n\geq 3$ this implies 3n-1 non-gaps outside $<6;6n+1>$.
In case $n\geq 4$ it implies the existence of more than 10 non-gaps outside $<6;6n+1>$.
Therefore only in case $n \geq 4$ the linear system $g^1_6$ is unique.

This example gives an illustration of the fact that $e>1$ in Lemma \ref{lemmaX} can impose conditions in applying the results of this paper.
We can only make the following conclusion can only be made in case $n\geq 4$.
For $g>15n-10$ all Weierstrass semigroups of genus $g$ containing $<6;6n+1>$ occur at most once.
In case $g=15n-10$ the only Weierstrass semigroup of genus $g$ containing $<6;6n+1>$ and occuring more than once is equal to $<6;6n+1>\cup \{12n-4; 18n-9; 18n-3; 24n-12; 24n-8; 24n-4; 30n-19; 30n-13; 30n-7; 30n-1 \}$.
\end{example}

In case $n=1$ in general Lemma \ref{lemmaX} does not imply uniqueness of $g^1_a$ in case $g\geq \frac{(a-1)(b-a+r)}{2}$.
In this case we can use another argument to conclude uniqueness of Weierstrass semigroups containing $<a;a+r>$ with $1 \leq r \leq a-2$ and $(a,r)=1$.

\begin{lemma} \normalfont \label{lemmaSection4}
Fix an integer $r\geq 2$.
There is a bound $A(r)$ such that in case $a\geq A(r)$, $(a,r)=1$ and $g>\frac{(a+r-1)(a-r)}{2}$ then a smooth curve $C$ of genus $g$ has at most one Weierstrass point whose Weierstrass semigroup contains $<a;a+r>$.
\end{lemma}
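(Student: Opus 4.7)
I argue by contradiction: assume $C$ has genus $g > (a+r-1)(a-r)/2$ and two distinct Weierstrass points $P \ne Q$ with $\WS(P)$ and $\WS(Q)$ both containing $\langle a, a+r\rangle$. I will show this is impossible once $a \ge A(r)$, treating separately the cases $|aP| = |aQ|$ and $|aP| \ne |aQ|$.

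First suppose $|aP| = |aQ|$, so $aQ \in |aP|$. Let $b_P = a + r_P$ be the first non-gap of $P$ not divisible by $a$ (so $r_P \le r$), and take the plane model $\Gamma \subset \mathbb{P}^2$ of degree $b_P$ obtained from a $g^2_{b_P} \subset |b_P P|$ as in Section~\ref{section3}. Then $\phi(P)$ is a cusp of type $(r_P, b_P)$ contributing $\delta_P = (r_P-1)(b_P-1)/2$ by Lemma~\ref{lemma4}, and the analysis preceding Corollary~\ref{corollary9} makes $\phi(Q)$ a cusp of type $(\mu, a)$ for the unique $\mu \in \{1, \ldots, a-1\}$ of Lemma~\ref{lemma2}, contributing $\delta_Q = (\mu-1)(a-1)/2$. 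Lemma~\ref{lemma1} gives $2a - \mu \in \WS(Q)$, hence $\WS(Q) \supseteq \langle a, a+r, 2a-\mu\rangle$. For each $\mu \ne a-r$ in $\{1, \ldots, a-1\}$, a direct estimate shows that the genus of this three-generator numerical semigroup is strictly less than $(a+r-1)(a-r)/2$ once $a \ge A(r)$, contradicting $g > (a+r-1)(a-r)/2$. So we must have $\mu = a-r$ (which forces $r_P = r$), and then
\[
g \;\le\; p_a(\Gamma) - \delta_P - \delta_Q \;=\; \frac{(a+r-1)(a-1)}{2} - \frac{(a-r-1)(a-1)}{2} \;=\; r(a-1),
\]
which is itself $< (a+r-1)(a-r)/2$ for $a \ge A(r)$, again a contradiction.

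Second, suppose $|aP| \ne |aQ|$. Then $(f_1, f_2) : C \to \mathbb{P}^1 \times \mathbb{P}^1$ (with $f_i$ coming from the two distinct pencils) has image $\Gamma'$ of bidegree $(a/e, a/e)$ for some divisor $e < a$ of $a$, by the proof of Lemma~\ref{lemmaX}, and the images $\psi(P), \psi(Q) \in \Gamma'$ are total ramification points of the first and second projection respectively, producing two osculating flexes of maximal order. My plan is to begin from the coarse arithmetic-genus bound $g \le (a-1)^2$ for bidegree-$(a,a)$ curves on $\mathbb{P}^1 \times \mathbb{P}^1$ and to sharpen it by imposing these two osculating conditions on the canonically adjoint curves furnished by Lemma~\ref{lemmaY}; the extra vanishing conditions at $\psi(P)$ and $\psi(Q)$ cut down the dimension of the canonical system sufficiently to push the resulting bound strictly below $(a+r-1)(a-r)/2$ for $a \ge A(r)$.

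The main obstacle is precisely this second case. The rough bound $(a-1)^2$ exceeds $(a+r-1)(a-r)/2$ for $r \ge 2$ and $a$ large, so the two total ramifications have to be genuinely exploited rather than discarded; the asymptotic hypothesis $a \ge A(r)$ reflects the threshold at which this refined adjoint-curve bound dominates and the whole argument closes.
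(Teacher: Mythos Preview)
Your Case~2 is not a proof: you state a plan (impose osculating conditions at $\psi(P)$ and $\psi(Q)$ on the adjoint system of a bidegree-$(a/e,a/e)$ curve and hope the resulting genus bound drops below $(a+r-1)(a-r)/2$), explicitly call it ``the main obstacle,'' and stop. Nothing is actually computed, and it is far from clear that the adjoint conditions coming from two total ramification points of the two different rulings suffice to beat $(a+r-1)(a-r)/2$; the information that $a+r$ is also a non-gap of $P$ and $Q$ plays no role in your sketch, and without it the picture is symmetric in the two rulings and gives no reason to expect a bound involving~$r$. So the argument does not close.

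The paper avoids this difficulty with a different idea: rather than comparing the two $g^1_a$'s, it compares the two $g^2_{a+r}$'s. By Accola's Castelnuovo-type bound (\cite{ref19}, Theorem~4.3), a curve of genus $g\geq g(a;r)$ carries at most one simple $g^2_{a+r}$, and $g(a;r)$ grows like $a^2/3$, so for $a\gg 0$ it lies below $(a+r-1)(a-r)/2$. Hence $\vert (a+r)P\vert=\vert(a+r)Q\vert$, and on the common plane model of degree $a+r$ both $P$ and $Q$ give cusps of type $(r;a+r)$. Two such cusps force $g\leq (a+r-1)(a-r)/2$ by Lemma~\ref{lemma4}, contradiction. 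No case split on the $g^1_a$ is needed.

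Even your Case~1 has loose ends: you assert without proof that for every $\mu\neq a-r$ the semigroup $\langle a,a+r,2a-\mu\rangle$ has genus below $(a+r-1)(a-r)/2$, and the parenthetical ``which forces $r_P=r$'' is unjustified (nothing you have written prevents $P$ from having a non-gap strictly between $a$ and $a+r$). These could perhaps be repaired, but Case~2 is the real gap, and the fix is the Accola input above.
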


\begin{proof}
As a matter of fact, there is a genus bound $g(a;r)$ obtained in \cite{ref19} , Theorem 4.3, such that in case $C$ is a smooth curve of genus $g \geq g(a;r)$ then $C$ has at most one simple $g^2_{a+r}$.
This genus bound behaves like a polynomial in $a$ with highest order term $\frac{a^2}{3}$.
Since $\frac{(a-1)(b-1)}{2}$ is polynomial with highest order term $\frac{a^2}{2}$ it implies that for small $e \geq 1$ one has $\frac{(a-1)(b-1)}{2}-e\geq g(a;r)$ if $a>>0$.
In such case, if $P$ and $Q$ are two different Weierstrass points on $C$ such that $\WS (P)$ and $\WS (Q)$ both contain $<a;a+r>$ then we obtain $\vert (a+r)P \vert = \vert (a+r)Q \vert$.
In particular $P$ and $Q$ have to induce both a cusp of type $(r;a+r)$ on the same plane model $\Gamma$ of $C$ (as considered in Section 3).
This implies
\[
g\leq \frac{(a+r-1)(a-r)}{2} \text { .}
\]
This genus bound is polynomial with highest order term equal to $\frac{a^2}{2}$, hence for $a>>0$ this bound is larger than $g(a;r)$.
This implies that in case $g>\frac{(a+r-1)(a-r)}{2}$ and $a>>0$ then a curve $C$ of genus $g$ has at most one Weierstrass point with Weierstrass semigroup containing $<a;a+r>$.
\end{proof}

In case $r=2$ this is part of the arguments used in \cite{ref4} and using more detailled arguments one obtains clear and good genus bounds.
It should be noted that in case $a>>0$ this bound on $g$ is sharp.
One can make use of plane rational curves having two cusps of type $(r;a+r)$ as follows.
Choose $P$ and $Q$ different points on $\mathbb{P}^1$.
Let $E$ be a general effective divisor of degree $a-r$ and consider the pencil $rP+<aP;rQ+E>$, a $g^1_{a+r}$ on $\mathbb{P}^1$.
Then take $g^2_b=<(a+r)Q;rP+<aP;rQ+E>>$.
Then using arguments as those used in \cite{ref1}, Section 3, one can show that there exists a plane curve of degree $a+r$ having exactly two cusps of type $(r;a+r)$ and no other singularities.
The normalisation $C$ of this curve has genus $\frac{(a+r-1)(a-1)}{2}$ and has two different Weierstrass points whose Weierstrass semigroups contain $<a;a+r>$.

\begin{bibsection}
\begin{biblist}

\bib{ref19}{article}{
	author={R.D.M. Accola},
	title={On Castelnuovo's inequality for algebraic curves. I},
	journal={Trans. AMS},
	volume={251},
	year={1979},
	pages={357-373},
}
\bib{ref2}{article}{
	author={Castryck, W.},
	author={Cools, F.},
	title={Linear pencils encoded in the Newton polygon},
	journal={International Mathematics Research Notices},
	volume={2017},
	year={2017}
	pages={2998-3049},
}
\bib{ref6}{article}{
	author={M. Coppens},
	title={The Weierstrass gap sequence of the total ramification points of trigonal coverings of $\mathbb{P}^1$},
	journal={Indagationes Mathematicae},
	volume={88},
	year={1985},
	pages={245-276},
}
\bib{ref1}{article}{
	author={Coppens, M.},
	title={Weierstrass pionts with two prescribed non-gaps},
	journal={Pacific Journal of Mathematics},
	volume={131},
	year={1988},
	pages={71-104},	
}
\bib{ref4}{article}{
	author={M. Coppens},
	author={T. Kato},
	title={Weierstrass points with first two non-gaps equal to $n$ and $n+2$}
	journal={Kyushu Journal of Mathematics},
	volume={68},
	year={2014},
	pages={139-147},
}
\bib{ref11}{article}{
	author={J. Denef},
	author={F. Vercauteren},
	title={Computing zeta functions of $C_{a,b}$ curves using Monsky-Washnitzer cohomology},
	journal={Finite fields and their applications},
	volume={12},
	year={2006},
	pages={78-102},
}
\bib{ref5}{book}{
	author={H.M. Farkas},
	author={I. Kra},
	title={Riemann surfaces},
	series={Graduate Texts in Mathematics},
	volume={71},
	year={1980},
	publisher={Springer-Verlag},
}
\bib{ref12}{article}{
	author={R. Harasawa},
	author={J. Suzuki},
	title={Fast Jacobian group arithmetic on $C_{a,b}$ curves in \emph{Proceedings of ANTS IV (Leiden, The Netheerlands)}},
	journal={Lecture Notes on Computer Science},
	volume={1838},
	year={2000},
	pages={359-376},
}
\bib{ref9}{book}{
	author={R. Hartshorne},
	title={Algebraic Geometry},
	series={Graduate Texts in Mathematics},
	volume={52},
	year={1977},
	publisher={Springer-Verlag},
}
\bib{ref7}{article}{
	author={T. Kato},
	title={On Weierstrass points whose first non-gaps are three},
	journal={Journal fur die reine und angewandte mathematik},
	volume={316},
	year={1980},
	pages={99-109},
}
\bib{ref15}{article}{
	author={H. Knebl},
	author={E. Kunz},
	author={R. Waldi},
	title={Weierstrass semigroups and nodal curves of type p,q},
	journal={J. Algebra},
	volume={348},
	pages={315-335},
	year={2011},
}
\bib{ref16}{article}{
	author={H. Knebl},
	author={E. Kunz},
	author={R. Waldi},
	title={The space of nodal curves of type p, q with given Weierstrass semigroup},
	journal={Manuscripta Math.},
	volume={141},
	year={2013},
	pages={447-462},
}
\bib{ref17}{article}{
	author={J. Komeda},
	title={On Weierstrass points whose first non-gap are four},
	journal={J. reine Angew. Math.},
	volume={341},
	year={1983},
	pages={68-86},
}
\bib{ref18}{article}{
	author={J. Komeda},
	title={On the existence of Weierstrass points whose first non-gaps are five},
	journal={Manuscripta Math.},
	volume={76},
	year={1992},
	pages={193-211},
}
\bib{ref8}{article}{
	author={J. Komeda},
	author={T. Takahashi},
	title={Galois Weierstrass points whose Weierstrass semigroups are generated by two elements},
}
\bib{ref10}{book}{
	author={S. Miura},
	title={Error-correcting codes based on algebraic geometry},
	series={Ph-D Thesis, University of Tokyo},
	year={1997},
}
\bib{ref14}{article}{
	author={A. Nakayashiki},
	title={On algebraic expressions of sigma functions for $(n,s)$-curves},
	journal={Asian J. Math.},
	volume={14},
	year={2010},
	pages={175-212},
}
\bib{ref3}{article}{
	author={F.-O. Schreyer},
	title={"Certain Weierstrass points occur at most once on a curve" in \emph{Algebraic Geometry and Complex Analysis}},
	journal={Lecture Notes in Mathematics},
	volume={1414},
	year={1989},
	publisher={Springer},
	pages={162-168},
}
\bib{ref13}{article}{
	author={J. Suzuki},
	title={Klein's fundamental 2-form of second kind for the $C_{a,b}$ curves},
	year={2017},
	journal={preprint},
}

\end{biblist}
\end{bibsection}

\end{document}